\theoremstyle{definition}
\theoremstyle{remark}
\numberwithin{equation}{section}
    \newcommand{\BA}{{\mathbb {A}}} 
    \newcommand{\BC}{{\mathbb {C}}} 
    \newcommand{\BG}{{\mathbb {G}}}
    \newcommand{\CG}{{\mathcal {G}}}
    \newcommand{\CS}{{\mathcal {S}}}
    \newcommand{\tr}{{\mathrm{tr}}}
    \newcommand{\Vol}{{\mathrm{Vol}}}
    \newcommand{\pair}[1]{\langle {#1} \rangle}
    \newcommand{\ra}{\rightarrow} 
    \newcommand{\bs}{\backslash}
    \theoremstyle{plain}
    \newtheorem{thm}{Theorem}[section] \newtheorem{cor}[thm]{Corollary}
    \newtheorem{lem}[thm]{Lemma}  \newtheorem{prop}[thm]{Proposition}
    \numberwithin{equation}{section}
\begin{document}

\title{A Siegel-Weil formula for $(U(1,1), U(V))$ over a function field with $\dim V$ greater than 2}

%    Information for first author
\author{Wei Xiong}
%    Address of record for the research reported here
\address{College of Mathematics and Econometrics, Hunan University,
Changsha 410082, China}

\email{weixiong@amss.ac.cn}
%    \thanks will become a 1st page footnote.
\thanks{This work was supported by the National Natural Science Foundation of China (Grant No. 11301164 and No. 11671380) and the Fundamental Research Funds for the Central Universities of China.}

%    General info
\subjclass[2000]{Primary 11F27 11F55; Secondary 11E45, 11E76, 11R58}

\date{}

\dedicatory{To the memory of Professor Linsheng Yin.}

\keywords{unitary groups, Siegel-Weil formula, function fields}

\begin{abstract}
We establish a Siegel-Weil formula for the dual pair $(U(1,1), U(V))$ over a function field, where $V$ is a hermitian space of dimension greater than 2.
\end{abstract}

\maketitle

\section*{Introduction}
Let $F$ be a function field in one variable over a finite field with $\mathrm{char}(F)\neq 2$, and let $E$ be a quadratic field extension of $F$. We denote by $\BA$ the adele ring of $F$, and let $\psi$ be a nontrivial character of $F\bs \BA$. Let $V$ be a non-degenerate hermitian space over $E$, of dimension $m\geq 3$, with hermitian form $(,): V\times V \ra E$. Let $H=U(V)$ be the associated unitary group.

In this paper, we establish a Siegel-Weil formula for the unitary dual pair $(U(1,1), U(V))$. A particular form of this formula states that for any Schwartz-Bruhat function $\Phi \in \CS(V(\BA))$, the following identity holds:
\[
\int_{H(F)\bs H(\BA)}\left(\sum_{\xi \in V(F)}\Phi(h^{-1}\xi) \right)dh=2\Phi(0)+2\sum_{b\in F}\int_{V(\BA)}\Phi(x)\psi(b(x,x))dx.
\]
where $dh$ is the Tamagawa measure on $H(\BA)$ and $dx$ is the self-dual Haar measure on $V(\BA)$ with respect to $\psi$.

This work is motivated by Haris's pioneering work \cite{Haris}, in which he established a Siegel-Weil formula for orthogonal groups over a function field. In proving the formula, we use Haris's results in \cite{Haris} heavily.

The Siegel-Weil formulas over number fields have been well investigated (see for example \cite{KR1, KR2, KR3, Ichino04, Ichino07, Yamana, GQT} and the references cited therein). In contrast, the Siegel-Weil formulas over function fields are less studied and are only known in some special cases (see for example \cite{Haris, Wei}).\\

{\bf Acknowledgments}.
I thank Professor Ye Tian and Professor Song Wang for inviting me to visit the Morningside Center of Mathematics and for many helpful discussions.

This paper is dedicated to the memory of Professor Linsheng Yin. The author benefited greatly from Professor Yin's guidance as a post-doc at the Tsinghua University. Professor Yin was an expert on function fields and I hope that it is suitable to dedicate this work to his memory.

\section{Notations and statement of main result}
Recall that $F$ is a function field in one variable over a finite field with $\mathrm{char}(F)\neq 2$, and $E$ is a quadratic field extension of $F$. We denote the adele ring of $F$ by $\BA$, and denote the adele ring of $E$ by $\BA_E$.

Let $V=E^m$ be the space of column vectors over $E$ of dimension $m$, equipped with a non-degenerate hermitian form $(,): V\times V \ra E$. We always assume $m\geq 3$.

Let $H=U(V)$ be the unitary group associated with $V$, given by
\[
H(F)=\{h\in GL_m(E): (hx, hy)=(x,y), \forall x,y\in V\}.
\]
Let $G=U(1,1)$ be the quasi-split unitary group of rank 1, given by
\[
G(F)=\{g\in GL_2(E): g \begin{pmatrix} 0 &1\\-1& 0 \end{pmatrix} {}^t\!\bar{g}=\begin{pmatrix} 0 &1\\-1& 0 \end{pmatrix}\}.
\]
Then $(G, H)$ forms a unitary dual pair.

The group $G$ has a Siegel parabolic subgroup $P=NM$, where $N$ is the unipotent radical and $M$ is the Levi subgroup. Precisely,
\[
N(F)=\{n(b)=\begin{pmatrix} 1 &b\\0&1 \end{pmatrix} :b\in F\}
\]
and
\[
M(F)=\{m(a)=\begin{pmatrix} a &0\\0&\bar{a}^{-1} \end{pmatrix} :a\in E^{\times}\}.
\]
Then we have the Bruhat decomposition $G(F)=P(F)\cup P(F)wN(F)$, where $w=\begin{pmatrix} 0 &1\\-1& 0 \end{pmatrix}$.

For a place $v$ of $F$, let $F_v$ be the completion of $F$ at $v$ and let $O_v$ be the ring of integers of $F_v$. Let $K=\prod_v G(O_v)$. Then there is the Iwasawa decomposition
\[
G(\BA)=P(\BA)\cdot K.
\]
For $g\in G(\BA)$, we write $g=n(b)m(a)k$ with $n(b)\in N(\BA)$, $m(a)\in M(\BA)$ and $k\in K$, and we put
\[
|a(g)|=|a|_{\BA_E}.
\]

Fix a nontrivial character $\psi: F\bs \BA \ra \BC^{\times}$, and fix a Hecke character $\chi: E^{\times}\bs \BA_E^{\times} \ra \BC^{\times}$ such that $\chi|_{\BA^{\times}}=\epsilon$, where $\epsilon$ is the quadratic character associated to the quadratic extension $E/F$ by class field theory. Then there is an associated Weil representation $\omega=\omega_{\psi, \chi}$ of $G(\BA)\times H(\BA)$ on the space $\CS(V(\BA))$ of Schwartz-Bruhat functions on $V(\BA)$, given by
\[\begin{aligned}
& \omega(m(a))\Phi(x)=\chi(a)|a|_{\BA_E}^{\frac{m}{2}}\Phi(xa),\\
& \omega(n(b))\Phi(x)=\psi(b(x,x))\Phi(x),\\
& \omega(w)\Phi(x)=\int_{V(\BA)}\Phi(y)\psi(\tr_{E/F}(y,x))dy,\\
& \omega(h)\Phi(x)=\Phi(h^{-1}x),
\end{aligned}\]
for $\Phi \in \CS(V(\BA))$, $x\in V(\BA)$, $m(a)\in M(\BA)$, $n(b)\in N(\BA)$, and $h\in H(\BA)$, where $dy$ is the self-dual Haar measure on $V(\BA)$ with respect to the pairing on $V(\BA)$ given by $(x,y)\mapsto \psi(\tr_{E/F}(x,y))$ (see \cite[Lem. 4.1]{KS} for the local case and see \cite[$\S 1$]{Ichino04} for the number field case).

To each $\Phi \in \CS(V(\BA)$, there are two associated objects, which are both functions on $G(\BA)$. One is the Siegel Eisenstein series and the other is the theta integral. The Siegel-Weil formula relates these two objects. Now we define them.

For $\Phi \in \CS(V(\BA))$ and $s\in \BC$, the Siegel Eisenstein series on $G(\BA)$ is defined by
\[
E(g,s,\Phi)=\sum_{\gamma \in P(F)\bs G(F)} f_{\Phi}^{(s)}(\gamma g), \quad \forall g\in G(\BA),
\]
where $f_{\Phi}^{(s)}(g)=|a(g)|^{s-s_0}\omega(g)\Phi(0)$ is a function on $G(\BA)$ and $s_0=(m-1)/2$.

On the other hand, for $\Phi \in \CS(V(\BA))$, the theta integral on $G(\BA)$ is defined by
\[
I(g,\Phi)=\int_{[H]} \Theta(\Phi)(g,h)dh, \quad \forall g\in G(\BA),
\]
where $\Theta(\Phi)(g,h)=\sum_{x \in V(F)}\omega(g,h)\Phi(x)$ is the theta kernel function, $[H]:=H(F)\bs H(\BA)$ is the quotient group, and $dh$ is the Tamagawa measure on $H(\BA)$.

Note that for a connected algebraic group $\CG$ over $F$, there is a canonical Haar measure on $\CG(\BA)$ called the Tamagawa measure (see \cite{Weil65, Weil82} and \cite{Oes}), which is derived from a left invariant gauge form on $\CG$ (a gauge form on $\CG$ is a nonzero algebraic differential form on $\CG$ defined over $F$ and of degree $\dim \CG$ ). Note that Weil in \cite{Weil65, Weil82} defined the Tamagawa measure for any system of convergence factors for $\CG$, while Oesterl\'{e} in \cite{Oes} specified a canonical system of convergence factors (i.e. the local Artin $L$-values $L_v(1, \CG)$).

Now we state the main result of this paper.
\begin{thm}
Assume $m\geq 3$. Then for any $\Phi \in \CS(V(\BA))$ and $g\in G(\BA)$, $E(g,s,\Phi)$ is holomorphic at $s_0=(m-1)/2$, $I(g, \Phi)$ is absolutely convergent, and
\[
I(g,\Phi)=2E(g, s_0, \Phi).
\]
\end{thm}

To prove this result we follow the method in \cite{Haris} (also \cite{Igusa}). We give the main ideas below. First consider the case where $g=1$ is the identity element in $G(\BA)$.

By Bruhat decomposition $G(F)=P(F)\bigcup P(F)wN(F)$, we have
\[
E(1,s_0,\Phi)=\Phi(0)+\sum_{b\in F}\int_{V(\BA)}\Phi(x)\psi(b(x,x))dx.
\]
On the other hand, by the orbit decomposition $V(F)= \{0\}\cup \bigcup_{a\in F}V_a(F)$ for the action of $H(F)$ on $V(F)$, where $V_a(F)=\{x\in V(F): x\neq 0, q(x)=a\}$, we have
\[
I(1, \Phi)=\tau(H)\Phi(0)+\sum_{a\in F} \int_{[H]}\left(\sum_{\xi \in V_a(F)}\Phi(h^{-1}\xi)\right)dh,
\]
where $\tau(H)=\Vol([H], dh)$ is the Tamagawa number of $H$. In fact, $\tau(H)=2$.

For $\Phi \in \CS(V(\BA))$, define a function $F_{\Phi}^{*}$ on $\BA$  by
\[
F_{\Phi}^{*}(a)=\int_{V(\BA)}\Phi(x)\psi(a(x,x))dx.
\]
Then
\[
E(1,s_0,\Phi)=\Phi(0)+\sum_{a\in F}F_{\Phi}^{*}(a).
\]
Let $F_{\Phi}$ be the Fourier transform of $F_{\Phi}^{*}$ with respect to $\psi$. Then we can show that
\[
F_{\Phi}(a)=\frac{1}{\tau(H_a)} \int_{[H]}\left(\sum_{\xi\in V_a(F)}\Phi(h^{-1}\xi)\right)dh,
\]
where $H_a$ is the stabilizer of $H$ at any $\xi_a \in V_a(F)$ when $V_a(F)\neq \emptyset$ (if $V_a(F)=\emptyset$, then $F_{\Phi}(a)=0$). In fact, we also have $\tau(H_a)=2$. Then
\[
I(1,\Phi)=\tau(H)\Phi(0)+\sum_{a\in F} \tau(H_a)F_{\Phi}(a)=2\Phi(0)+2\sum_{a\in F} F_{\Phi}(a).
\]
Now consider the mapping $q: V\ra F$ given by $q(x)=(x,x)$. Then $(V, q)$ is a quadratic space over $F$ of dimension $2m$. Consider the adelization $q_{\BA}: V(\BA) \ra \BA$ given by $q_{\BA}(x)=(x,x)$ for $x\in V(\BA)$. A key result in this paper is that $q_{\BA}$ satisfies a so-called ``condition (B)'' defined by Weil in \cite[Prop. 2]{Weil65}, and hence we have
\[
\sum_{a\in F}F_{\Phi}^{*}(a)=\sum_{a\in F} F_{\Phi}(a),
\]
and both series are absolutely convergent.
So $I(1,\Phi)=2E(1,s_0, \Phi)$ and both are absolutely convergent.

In general, $E(g,s_0,\Phi)=E(1,s_0,\omega(g)\Phi)$ and $I(g,\Phi)=I(1,\omega(g)\Phi)$.
The desired formula thus follows.

\section{Proof of the formula}
In this section, we give the details of the proof of the Siegel-Weil formula.

\subsection{Some results of Weil}
We first recall some results of Weil in \cite{Weil65}.
\begin{prop}(\cite[Prop. 1]{Weil65})
Let $X$ and $G$ be locally compact abelian groups, and let $f: X \ra G$ be a continuous mapping. Let $G^{*}$ be the dual group of $G$. Let $dg$ be a Haar measure on $G$, and let $dg^{*}$ be the dual measure. Let $dx$ be a Haar measure on $X$. For $\Phi \in \CS(X)$ a Schwartz-Bruhat function on $X$, define a function on $G^{*}$ by
\[
F_{\Phi}^{*}(g^{*})=\int_X \Phi(x)\pair{f(x), g^{*}}dx.
\]
Suppose $f$ satisfies the following condition:

(A) \quad for $\Phi \in \CS(X)$, $F_{\Phi}^{*}$ is integrable on $G^{*}$ and the integral $\int |F_{\Phi}^{*}|dg^{*}$ converges uniformly on every compact subset of $\CS(X)$.

Then for every $g\in G$, there corresponds a measure $\mu_g$ on $X$, of support contained in $f^{-1}(\{g\})$, such that for every continuous function $\Phi$ on $X$ with compact support, the function on $G$ defined by $F_{\Phi}(g)=\int \Phi d\mu_g$ is continuous and satisfies $\int F_{\Phi}dg=\int \Phi dx$. Moreover, the measures $\mu_g$ are tempered; and for every $\Phi \in \CS(X)$, $F_{\Phi}$ is continuous, belonging to $L^1(G)$, satisfies $\int F_{\Phi}dg=\int \Phi dx$, and is the Fourier transform of $F_{\Phi}^{*}$.
\end{prop}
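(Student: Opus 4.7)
The key observation is that $F_\Phi^\ast$ is, by Fubini, the measure-theoretic Fourier transform of the pushforward $\nu_\Phi:=f_\ast(\Phi\,dx)$, which is a finite complex Borel measure on $G$ (since $\Phi\in\CS(X)\subset L^1(X)$ has total variation at most $\int_X|\Phi|\,dx$). A direct computation gives $\widehat{\nu_\Phi}(g^\ast)=\int_G\langle g,g^\ast\rangle\,d\nu_\Phi(g)=\int_X\langle f(x),g^\ast\rangle\Phi(x)\,dx=F_\Phi^\ast(g^\ast)$. Condition (A) says $\widehat{\nu_\Phi}=F_\Phi^\ast\in L^1(G^\ast)$, so I would then invoke Fourier inversion for finite complex measures on an LCA group: a finite complex measure whose Fourier transform lies in $L^1$ of the dual is absolutely continuous with respect to $dg$, and its Radon--Nikodym density equals the inverse Fourier transform, which is automatically continuous. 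Setting $F_\Phi(g):=\int_{G^\ast}F_\Phi^\ast(g^\ast)\overline{\langle g,g^\ast\rangle}\,dg^\ast$ one obtains $\nu_\Phi=F_\Phi\,dg$ with $F_\Phi$ continuous; since $\nu_\Phi$ is finite, $F_\Phi\in L^1(G)$ and $\int_G F_\Phi\,dg=\nu_\Phi(G)=\int_X\Phi\,dx$, which is both the integral identity and the identification of $F_\Phi$ as the Fourier transform of $F_\Phi^\ast$.

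Next, for each fixed $g\in G$ define $\mu_g$ as the functional $\Phi\mapsto F_\Phi(g)$. For $\Phi\ge 0$ the pushforward $\nu_\Phi$ is a positive measure, so its continuous density $F_\Phi$ is pointwise non-negative; hence $\mu_g$ is positive on the cone of non-negative test functions and extends by the Riesz representation theorem to a positive Radon measure on $X$, giving the formula $F_\Phi(g)=\int_X\Phi\,d\mu_g$ for continuous compactly supported $\Phi$ (via a standard approximation of such $\Phi$ by Schwartz--Bruhat functions, to which the preceding inversion step applies). If $\Phi$ vanishes on $f^{-1}(U)$ for an open neighborhood $U$ of $g$, then $\nu_\Phi|_U=0$, so its continuous density $F_\Phi$ vanishes identically on $U$ and in particular $F_\Phi(g)=0$, giving $\supp(\mu_g)\subseteq f^{-1}(\{g\})$. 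Temperedness is the estimate $|\mu_g(\Phi)|=|F_\Phi(g)|\le\|F_\Phi^\ast\|_{L^1(G^\ast)}$, which the uniform-on-compacts clause in condition (A) upgrades to continuity of $\Phi\mapsto\mu_g(\Phi)$ on $\CS(X)$ with its Schwartz topology.

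\textbf{Main obstacle.} The genuinely delicate ingredient is Fourier inversion applied to a \emph{complex} finite measure whose Fourier transform is $L^1$; one reduces to the positive case by Jordan decomposition of $\nu_\Phi$ and then applies Bochner's theorem together with the Plancherel-based inversion formula. A secondary but essential point is that the support argument requires a \emph{continuous} (not merely almost-everywhere) representative of the density, which is exactly what Fourier inversion under (A) provides — so the whole construction of $\mu_g$ as a genuine measure supported on the fiber hinges on the regularity gain guaranteed by hypothesis (A).
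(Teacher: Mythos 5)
The paper offers no proof of this statement at all: it is quoted verbatim from Weil \cite[Prop.~1]{Weil65} and used as a black box, so the only meaningful comparison is with Weil's original argument --- and your proposal follows essentially that same route: identify $F_\Phi^{*}$ as the Fourier transform of the pushforward measure $f_{*}(\Phi\,dx)$, use condition (A) and Fourier inversion to produce a continuous density $F_\Phi$ with $\int F_\Phi\,dg=\int\Phi\,dx$, and then build $\mu_g$ from the positive functional $\Phi\mapsto F_\Phi(g)$ together with the support and temperedness estimates, all of which you carry out correctly in outline. The one step whose justification does not work as written is your treatment of the ``main obstacle'': Jordan-decomposing the complex measure $\nu_\Phi$ does not reduce the inversion problem to the positive case, because the positive parts of $\nu_\Phi$ need not individually have integrable Fourier transforms, so Bochner's theorem cannot be applied to them separately. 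The repair is either to invoke the inversion theorem for finite complex measures with $L^1$ Fourier transform directly (its standard proof runs through the multiplication formula and an approximate identity and needs no positivity), or --- in the totally disconnected setting relevant to this paper --- to decompose $\Phi$ itself into non-negative functions in $\CS(X)$, to each of which condition (A) applies, and only then push forward; either patch leaves the rest of your argument intact.
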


Weil gave criteria for the above ``condition (A)".
\begin{prop}(\cite[Prop. 7 and the bottom of p. 8]{Weil65})
Let $G$ be a locally compact abelian group, $\Gamma$ a discrete subgroup of $G$ such that $G/\Gamma$ is compact, and $\Gamma_{*}$ the discrete subgroup of $G^{*}$ which corresponds by duality to $\Gamma$. Let $X$ be a locally compact abelian group and $f: X\ra G$ a continuous mapping.

(i) Suppose $f$ satisfies the following condition:

(B)\quad for any $\Phi \in \CS(X)$ and $g^{*}\in G^{*}$, the series
\[
\sum_{\gamma^{*}\in \Gamma_{*}}|F_{\Phi}^{*}(g^{*}+\gamma^{*})|
\]
is convergent, and is uniformly convergent on every compact subset of $\CS(X)\times G^{*}$.

Then $f$ satisfies ``condition (A)" of the above proposition. Moreover, if $F_{\Phi}$ denotes the Fourier transform of the function $F_{\Phi}^{*}$, then
\[
\sum_{\gamma \in \Gamma}F_{\Phi}(\gamma)=\sum_{\gamma^{*}\in \Gamma_{*}}F_{\Phi}^{*}(\gamma^{*}),
\]
the two series are absolutely convergent.\\

(ii) Suppose $f$ satisfies the following two conditions:

($B_0$) $(\Phi, g^{*})\ra \Phi_{g^{*}}$ is a continuous mapping of $\CS(X)\times G^{*}$ into $\CS(X)$, where $\Phi_{g^{*}}$ is given by $\Phi_{g^{*}}(x)=\Phi(x)\pair{f(x), g^{*}}$;

($B_1$) the series $\sum_{\gamma^{*}\in \Gamma_{*}}|F_{\Phi}^{*}(\gamma^{*})|$ is uniformly convergent on every compact subset of $\CS(X)$.

Then $f$ satisfies ``condition (B)", and all the above conclusions hold.
\end{prop}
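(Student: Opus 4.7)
The plan is to reduce the identity for general $g \in G(\BA)$ to the case $g = 1$ by exploiting the $G(\BA)$-equivariance built into both sides. Concretely, the definition of the Weil representation together with the cocycle identity $f_{\omega(g)\Phi}^{(s)}(g') = f_{\Phi}^{(s)}(g'g)$ for $g \in G(\BA)$ immediately gives $E(g,s,\Phi) = E(1,s,\omega(g)\Phi)$, and similarly $I(g,\Phi) = I(1,\omega(g)\Phi)$. So once I prove $I(1,\Phi) = 2E(1,s_0,\Phi)$ with the desired holomorphy and convergence for every $\Phi \in \CS(V(\BA))$, the general case follows by applying the same identity to $\omega(g)\Phi$.

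I would then unfold each side separately. For the Eisenstein series at $s = s_0$, the Bruhat decomposition $G(F) = P(F) \sqcup P(F)wN(F)$ splits the sum over $P(F)\bs G(F)$ into the trivial coset, which contributes $f_{\Phi}^{(s_0)}(1) = \omega(1)\Phi(0) = \Phi(0)$, and the big-cell contribution. Evaluating the big-cell term using $\omega(w)$ and $\omega(n(b))$ yields
\[
E(1,s_0,\Phi) = \Phi(0) + \sum_{b\in F} F_{\Phi}^{*}(b),
\]
with $F_{\Phi}^{*}(a) := \int_{V(\BA)}\Phi(x)\psi(a(x,x))dx$. For the theta integral, the orbit decomposition $V(F) = \{0\} \sqcup \bigsqcup_{a \in F} V_a(F)$ for the $H(F)$-action on $V(F)$ (noting that by Witt's theorem $H(F)$ acts transitively on each fiber $V_a(F)$ when nonempty) together with unfolding against the stabilizer $H_a$ of a chosen $\xi_a \in V_a(F)$ gives
\[
I(1,\Phi) = \tau(H)\Phi(0) + \sum_{a\in F} \tau(H_a) \cdot \frac{1}{\tau(H_a)} \int_{[H]}\sum_{\xi \in V_a(F)}\Phi(h^{-1}\xi)\, dh,
\]
provided the interchange of sum and integral can be justified. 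The identity will then collapse to $I(1,\Phi) = 2E(1,s_0,\Phi)$ once I establish three things: the Tamagawa numbers $\tau(H) = \tau(H_a) = 2$ (the first is the classical computation for unitary groups since $m \geq 3$, the second follows because $H_a \cong U(V_a^\perp)$ for $a \neq 0$ is again of the same type, and the degenerate case $a = 0$ is handled similarly via a Levi of a parabolic), the orbital identification
\[
F_{\Phi}(a) = \frac{1}{\tau(H_a)} \int_{[H]}\sum_{\xi \in V_a(F)}\Phi(h^{-1}\xi)\, dh
\]
where $F_{\Phi}$ is the Fourier transform of $F_{\Phi}^{*}$ with respect to $\psi$, and the Poisson-type identity $\sum_{a \in F} F_{\Phi}^{*}(a) = \sum_{a \in F} F_{\Phi}(a)$ with absolute convergence.

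The heart of the proof, and what I expect to be the main obstacle, is establishing the Poisson identity, which is exactly Weil's conclusion under condition (B) applied to $X = V(\BA)$, $G = \BA$, $\Gamma = F$, $\Gamma_* = F$ (under the self-duality pairing induced by $\psi$), and the continuous map $f = q_{\BA} : V(\BA) \to \BA$, $x \mapsto (x,x)$. Here $(V,q)$ is viewed as a $2m$-dimensional quadratic space over $F$. Following Haris's method in the orthogonal case, I will verify the easier pair of conditions ($B_0$) and ($B_1$) rather than ($B$) directly. Condition ($B_0$), the continuity of $(\Phi, a) \mapsto \Phi(x)\psi(a q(x))$ as a map into $\CS(V(\BA))$, is formal. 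Condition ($B_1$), the uniform convergence of $\sum_{a \in F}|F_{\Phi}^{*}(a)|$ on compacta in $\CS(V(\BA))$, is the substantive estimate: since $m \geq 3$, the hermitian form on the $2m$-dimensional real quadratic space is non-degenerate of rank $\geq 6$, so for pure tensor $\Phi$ at each place I can compute $F_{\Phi}^{*}$ essentially as a local Weil/Gauss integral and obtain pointwise decay in $a$ that is summable over $F$, uniformly in $\Phi$ ranging over a compact subset of $\CS(V(\BA))$. Assuming condition $(B)$ holds, the resulting equality $\sum_{a \in F} F_{\Phi}^{*}(a) = \sum_{a \in F} F_{\Phi}(a)$ both establishes the holomorphy of $E(g,s,\Phi)$ at $s = s_0$ (via absolute convergence of the unfolded Eisenstein series) and the absolute convergence of $I(g,\Phi)$, and supplies the final identification $I(1,\Phi) = 2\Phi(0) + 2\sum_{a\in F}F_{\Phi}(a) = 2E(1,s_0,\Phi)$.
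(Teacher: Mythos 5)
Your proposal does not prove the statement at hand. The statement is Weil's abstract Poisson-summation criterion for a continuous map $f\colon X\to G$ between locally compact abelian groups: part (i) asserts that condition (B) implies condition (A) together with the identity $\sum_{\gamma\in\Gamma}F_{\Phi}(\gamma)=\sum_{\gamma^{*}\in\Gamma_{*}}F_{\Phi}^{*}(\gamma^{*})$, both series being absolutely convergent, and part (ii) asserts that the pair of conditions ($B_0$), ($B_1$) implies (B). What you have written is instead an outline of the proof of the main Siegel--Weil theorem of the paper (the identity $I(g,\Phi)=2E(g,s_0,\Phi)$), and in the course of that outline you explicitly invoke the very proposition you were asked to prove: you say the Poisson identity ``is exactly Weil's conclusion under condition (B)'' and later write ``Assuming condition $(B)$ holds, the resulting equality\dots''. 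So nothing in the proposal establishes either (i) or (ii); the argument is circular with respect to the target statement.

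A proof of the actual statement would have to work at the level of abstract harmonic analysis on locally compact abelian groups. For (ii), one writes $F_{\Phi}^{*}(g^{*}+\gamma^{*})=F_{\Phi_{g^{*}}}^{*}(\gamma^{*})$ and uses the continuity in ($B_0$) to push a compact subset of $\CS(X)\times G^{*}$ to a compact subset of $\CS(X)$, so that ($B_1$) yields the uniform convergence demanded by (B). For (i), one integrates the series in (B) over a fundamental domain for $\Gamma_{*}$ in $G^{*}$ (compact, since $\Gamma_{*}$ is discrete with compact quotient by duality) to obtain the integrability of $F_{\Phi}^{*}$ on $G^{*}$ required by condition (A); one then invokes Proposition 2.1 to construct the fiber measures $\mu_{g}$ and the function $F_{\Phi}\in L^{1}(G)$ whose Fourier transform is $F_{\Phi}^{*}$, and finally applies classical Poisson summation on $G$ relative to $\Gamma$, with (B) supplying the absolute convergence of the $\Gamma_{*}$-side and hence the validity of the interchange. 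None of these steps appears in your proposal. (The paper itself does not reprove this proposition either --- it is quoted from Weil --- but a blind proof attempt for this statement must engage with the abstract setting rather than with the unitary dual pair.)
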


In the following, we will identify $\BA$ with its dual via the pairing $\pair{a,b}\mapsto \psi(ab)$, and identify $V(\BA)$ with its dual via the pairing $\pair{x,y}\mapsto \psi(\tr_{E/F}(x,y))$. For a place $v$ of $F$, we also identify $F_v$ (resp. $V(F_v)$) with its dual via $\psi_v$ similarly. We will always choose self-dual Haar measures on the groups $\BA$, $F_v$, $V(\BA)$ and $V(F_v)$.

We want to show that the mapping $q_{\BA}: V(\BA) \ra \BA$ given by $q_{\BA}(x)=(x, x)$ satisfies ``conditions ($B_0$) and ($B_1$)'' and hence also satisfies ``condition (B)" and ``condition (A)".

First we consider the local case.

\subsection{The local case}
Let $v$ be a place of $F$. Consider the mapping $q_v: V(F_v)\ra F_v$  given by $q_v(x)=(x,x)$ for $x\in V(F_v)$.

For $\Phi \in \CS(V(F_v))$, define a function $F_{\Phi}^{*}$ on $F_v$ by
\[
F_{\Phi}^{*}(a)=\int_{V(F_v)}\Phi(x)\psi_v(aq_v(x))dx, \quad \forall a\in F_v.
\]
where $dx$ is the self-dual Haar measure on $V(F_v)$ with respect to $\psi_v$.

\begin{lem}
Let $C$ be a compact subset of $\CS(V(F_v))$. Then there exists a positive constant $c$ such that
\[
|F^*_{\Phi}(a)|\leq c\cdot \max(1, |a|_v)^{-m}
\]
for all $\Phi \in C$ and $a\in F_v$, where $|\cdot|_v$ is the absolute value on $F_v$.
\end{lem}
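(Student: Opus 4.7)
The plan is to view $q_v\colon V(F_v)\to F_v$, $q_v(x)=(x,x)$, as a non-degenerate quadratic form of dimension $n=2m$ on $V(F_v)$ regarded as an $F_v$-vector space, and to extract the decay $|a|_v^{-m}=|a|_v^{-n/2}$ from the standard Weil / stationary-phase estimate for oscillatory integrals with such a phase. Non-degeneracy descends from that of the hermitian form, and $\mathrm{char}(F)\neq 2$ allows $q_v$ to be diagonalised over $F_v$. This is precisely the setting treated in \cite{Haris} for orthogonal groups, and the plan is to invoke (or transcribe) that argument in the present context.

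For $|a|_v\leq 1$ I would use the trivial bound $|F_\Phi^*(a)|\leq \|\Phi\|_1$. Since $F$ is a function field, every $F_v$ is non-archimedean, so $\CS(V(F_v))$ is a strict inductive limit of the finite-dimensional subspaces $\CS_{L,K_1}(V(F_v))$ of functions supported in a fixed compact set $L$ and invariant under a fixed open compact subgroup $K_1\subset V(F_v)$. Any compact $C$ is contained in some such subspace, so $\sup_{\Phi\in C}\|\Phi\|_1<\infty$; this handles the small-$a$ regime uniformly.

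For $|a|_v>1$ I would apply the Weil identity for a non-degenerate quadratic form $q_v$ of dimension $2m$, which, with suitable normalisation of the self-dual measure, takes the form
\[
F_\Phi^*(a)\;=\;\gamma_v(a\psi_v,q_v)\,|a|_v^{-m}\int_{V(F_v)}\widehat\Phi(y)\,\psi_v\!\left(-\tfrac{q_v^*(y)}{4a}\right)dy,
\]
where $\gamma_v$ is a Weil index of modulus $1$, $q_v^*$ is the dual form, and $\widehat\Phi$ denotes the $\psi_v$-Fourier transform. The right-hand integral is pointwise dominated by $\|\widehat\Phi\|_1$, which is again uniformly bounded over $\Phi\in C$ because the Fourier transform is a homeomorphism of $\CS(V(F_v))$ and so carries compact sets to compact sets. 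Together these give $|F_\Phi^*(a)|\leq c\,|a|_v^{-m}$ for $|a|_v>1$, which combined with the previous bound yields the stated estimate.

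The main obstacle will be setting up the Weil identity with the correct normalisations so that the exponent comes out as exactly $-m$ uniformly, rather than picking up an extra factor depending on the discriminant of $q_v$ that would spoil uniformity over $C$. This bookkeeping is already carried out in the orthogonal-group lemma of \cite{Haris}, and once $q_v$ is recognised as an ordinary non-degenerate quadratic form of dimension $2m$ on $V(F_v)$, that argument transfers here; the rest of the proof is routine.
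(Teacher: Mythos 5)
Your proposal is correct and takes essentially the same route as the paper: the paper's entire proof is the observation that $(V(F_v),q_v)$ is a non-degenerate quadratic space of dimension $2m\geq 6$ over $F_v$ together with a citation of Lemma~1 of \cite{Haris}, which is exactly the reduction you make. The additional details you supply (trivial bound for $|a|_v\leq 1$, Weil's identity giving the $|a|_v^{-m}$ decay for $|a|_v>1$, uniformity over $C$ via the inductive-limit structure) are precisely the content of that cited lemma.
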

\begin{proof}
This follows from \cite[Lem. 1]{Haris} by noting that $V(F_v)$ is of dimension $2m\geq 6$ over $F_v$ when regarded as a quadratic space over $F_v$.
\end{proof}

\begin{prop}
The mapping $q_v: V(F_v)\ra F_v$ satisfies ``condition (A)''.
\end{prop}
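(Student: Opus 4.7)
The plan is to deduce condition (A) as an immediate consequence of the decay estimate just established in the Lemma. Recall that condition (A), for the mapping $q_v \colon V(F_v) \to F_v$, requires that for each $\Phi \in \CS(V(F_v))$ the function $F_\Phi^*$ be integrable on $F_v$ (identified with its Pontryagin dual via $\psi_v$), and that the integral $\int_{F_v} |F_\Phi^*(a)|\, da$ converge uniformly on every compact subset of $\CS(V(F_v))$. Both assertions will follow by dominating $|F_\Phi^*|$ uniformly on a compact set $C$ by a single explicit integrable function of $a$.

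First, given a compact $C \subset \CS(V(F_v))$, the Lemma supplies a constant $c = c(C) > 0$ such that $|F_\Phi^*(a)| \leq c \cdot \max(1,|a|_v)^{-m}$ for all $\Phi \in C$ and all $a \in F_v$. Next, since $F$ is a function field in one variable over a finite field, $F_v$ is non-archimedean with residue field of cardinality $q_v$. Decomposing $F_v$ into the unit ball and the annuli $\{|a|_v = q_v^k\}$ for $k \geq 1$ (each of which has finite Haar measure comparable to $q_v^k$), one finds
$$\int_{F_v} \max(1,|a|_v)^{-m}\, da \;\ll\; 1 + \sum_{k \geq 1} q_v^{k(1-m)},$$
and this geometric series converges because $m \geq 3 > 1$. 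In particular $F_\Phi^*$ is integrable on $F_v$ for each $\Phi \in \CS(V(F_v))$.

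To obtain the uniform convergence statement, fix $\epsilon > 0$ and choose $N$ large enough that $c \sum_{k > N} q_v^{k(1-m)} < \epsilon$; then $\int_{|a|_v > q_v^N} |F_\Phi^*(a)|\, da < \epsilon$ uniformly for $\Phi \in C$, which is exactly the formulation of uniform convergence required by condition (A). There is no real obstacle at this stage: the proposition is a straightforward packaging of the Lemma together with the elementary local integral estimate above, and the hypothesis $m \geq 3$ enters only to guarantee convergence of the geometric tail (in fact $m \geq 2$ would already suffice at this local step; the stronger hypothesis will be used when summing over places to verify condition $(B_1)$ globally).
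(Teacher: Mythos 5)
Your proposal is correct and follows essentially the same route as the paper: dominate $|F_\Phi^*(a)|$ by $c\cdot\max(1,|a|_v)^{-m}$ via the preceding lemma, then verify integrability by decomposing $F_v$ into $O_v$ and the annuli $|a|_v=q_v^k$ and summing the resulting geometric series. The extra remarks on uniform tail estimates and on $m\geq 2$ sufficing locally are accurate but not needed beyond what the paper records.
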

\begin{proof}
By the above lemma, it suffices to show that $\int_{F_v} \max(1, |a|_v)^{-m}da$ is convergent. Now
\[\begin{aligned}
\int_{F_v} \max(1, |a|_v)^{-m}da
&=\int_{O_v}\max(1, |a|_v)^{-m}da+\int_{F_v-O_v}\max(1, |a|_v)^{-m}da\\
&=\Vol(O_v)+\int_{|a|_v>1}\max(1, |a|_v)^{-m}da\\
&=\Vol(O_v)+\int_{|a|_v>1}|a|_v^{-m}da.
\end{aligned}\]
But
\[\begin{aligned}
\int_{|a|_v>1}|a|_v^{-m}da
&=\sum_{n\geq 1}\int_{|a|_v=q_v^n} q_v^{-mn}da=\sum_{n\geq 1}q_v^{-mn}\int_{|a|_v=q_v^n}da\\
&=\Vol(O_v^{\times}, da)\sum_{n\geq 1}q_v^{n(1-m)}<\infty
\end{aligned}\]
since $m\geq 3$, where $q_v$ is the cardinality of the residue field of $O_v$.
Thus
\[
\int_{F_v} \max(1, |a|_v)^{-m}da<\infty.
\]
\end{proof}

For $\Phi \in \CS(V(F_v))$, let $F_{\Phi}$ be the Fourier transform of $F_{\Phi}^{*}$, i.e.
\[
F_{\Phi}(b)=\int_{F_v} F_{\Phi}^{*}(a)\psi(ab)da, \quad \forall b\in F_v,
\]
where $da$ is the self-dual Haar measure on $F_v$ with respect to $\psi_v$.

Since $q_v: V(F_v) \ra F_v$ satisfies ``condition (A)'',  by Proposition 2.1, for any $a\in F_v$, there is a measure $\mu_{a}$ on $V(F_v)$, of support contained in $q_v^{-1}(\{a\})$ such that
\[
F_{\Phi}(a)=\int_{V(F_v)} \Phi d\mu_{a}
\]
and
\[
\int_{F_v} F_{\Phi}(a)da=\int_{V(F_v)}\Phi(x)dx
\]
for all continuous functions $\Phi$ on $V(F_v)$ with compact support.

Since $q_{v}^{-1}(\{a\})=V_a(F_v)\cup \{0\}$ and $\mu_a(\{0\})=0$, we have
\[
 F_{\Phi}(a)=\int_{q_{v}^{-1}(\{a\})}\Phi d\mu_a=\int_{V_a(F_v)}\Phi d\mu_a.
\]
Next we identity the measures $\mu_a$. For $a\in F_v$, let $V_a(F_v)=\{x\in V(F_v): q_v(x)=a, x\neq 0\}$; then there is a gauge form $\theta_a(x)=(dx/d(q_v(x)))_a$ on $V_a(F_v)$ (see
\cite[p. 12]{Weil65}). Moreover, by the discussions on \cite[$\S 6$, p. 13]{Weil65}, the measure $|\theta_a|_v$ on $V_a(F_v)$ derived from $\theta_a$ has the following property: for all continuous functions $\Phi$ on $V(F_v)$ with compact support, we have
\[
\int_{V(F_v)}\Phi(x)dx=\int_{F_v} \left(\int_{V_a(F_v)}\Phi|\theta_a|_v \right)da.
\]

On the other hand, for such functions $\Phi$, we have
\[
\int_{V(F_v)}\Phi(x)dx=\int_{F_v} \left(\int_{V_a(F_v)}\Phi d\mu_a\right) da.
\]

By the uniqueness of the measures $\mu_{a}$, we have $\mu_{a}=|\theta_a|_v$ on $V_a(F_v)$.
\begin{lem}
For $\Phi \in \CS(V(F_v))$, the Fourier transform $F_{\Phi}$ of $F_{\Phi}^*$ is given by
\[
F_{\Phi}(a)=\int_{V_a(F_v)} \Phi |\theta_a|_v, \quad \forall a\in F_v.
\]
\end{lem}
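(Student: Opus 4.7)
The plan is simply to assemble the three ingredients already set up in the paragraphs preceding the lemma. Proposition 2.3 establishes that $q_v : V(F_v) \to F_v$ satisfies condition (A), so Proposition 2.1 applies: for each $a \in F_v$ there is a tempered measure $\mu_a$ on $V(F_v)$, supported in the fiber $q_v^{-1}(\{a\})$, satisfying $F_\Phi(a) = \int \Phi\,d\mu_a$ for every $\Phi \in \CS(V(F_v))$, and disintegrating the self-dual Haar measure $dx$ in the sense that $\int_{V(F_v)} \Phi(x)\,dx = \int_{F_v}(\int \Phi\,d\mu_a)\,da$ for every continuous $\Phi$ of compact support.

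On the other hand, Weil's construction of the gauge form $\theta_a = (dx/dq_v)_a$ on the smooth fiber $V_a(F_v)$ (\cite[\S6, p.~13]{Weil65}) yields a measure $|\theta_a|_v$ that disintegrates $dx$ in exactly the same way, namely
\[
\int_{V(F_v)} \Phi(x)\,dx = \int_{F_v}\left(\int_{V_a(F_v)} \Phi\,|\theta_a|_v\right) da.
\]
Comparing these two disintegration formulas and using the uniqueness of the fiber measures $\mu_a$ provided by Proposition 2.1, one concludes that $\mu_a = |\theta_a|_v$ on $V_a(F_v)$ — this is precisely the identification recorded in the paragraph just above the lemma. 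Since $\mu_a(\{0\}) = 0$, we also have $F_\Phi(a) = \int_{V_a(F_v)} \Phi\,d\mu_a$, and substituting $|\theta_a|_v$ for $d\mu_a$ yields the stated formula.

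The main point to be careful with is the uniqueness step, because disintegrations of measures are in general only unique almost everywhere in the base. This is resolved by observing that for continuous test functions both $a \mapsto \int \Phi\,d\mu_a$ and $a \mapsto \int \Phi\,|\theta_a|_v$ vary continuously in $a$, so equality almost everywhere automatically upgrades to equality for every $a \in F_v$ (with both sides understood to be zero when $V_a(F_v) = \emptyset$). The extension from continuous, compactly supported $\Phi$ to general $\Phi \in \CS(V(F_v))$ then comes for free from Proposition 2.1, which asserts $F_\Phi(a) = \int \Phi\,d\mu_a$ already for every Schwartz–Bruhat function.
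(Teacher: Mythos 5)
Your argument is correct and follows essentially the same route as the paper: identify $\mu_a$ with $|\theta_a|_v$ by comparing the two disintegrations of $dx$ over $F_v$ and invoking the uniqueness (pinned down by continuity in $a$) of the fiber measures from Weil's Proposition 1, then use $\mu_a(\{0\})=0$ to restrict the integral to $V_a(F_v)$. The only cosmetic difference is the last step: the paper passes from continuous compactly supported $\Phi$ to all of $\CS(V(F_v))$ by density, while you appeal directly to the ``moreover'' clause of Proposition 2.1 asserting $F_\Phi(a)=\int\Phi\,d\mu_a$ for tempered $\mu_a$ and arbitrary Schwartz--Bruhat $\Phi$; both are valid (and indeed, at a non-archimedean place every Schwartz--Bruhat function is already continuous with compact support).
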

\begin{proof}
We have seen that the equality holds for $\Phi$ continuous and of compact support. Since the space of all the continuous functions $\Phi$ on $V(F_v)$ with compact support is dense in $\CS(V(F_v))$, the equality holds for all $\Phi \in \CS(V(F_v))$.
\end{proof}

\subsection{The global case}
In this section, we show that the mapping $q_{\BA}: V(\BA) \ra \BA$ given by $q_{\BA}(x)=(x,x)$ satisfies ``condition (B)" and ``condition (A)".

Recall that for $\Phi \in \CS(V(\BA))$, we have defined a function $F_{\Phi}^{*}$ on $\BA$ by
\[
F_{\Phi}^{*}(a)=\int_{V(\BA)}\Phi(x)\psi(aq_{\BA}(x))dx.
\]
\begin{lem}
The mapping $q_{\BA}$ satisfies ``condition $(B_0)$", i.e. $(\Phi, a)\mapsto \Phi_a$ is a continuous mapping of $\CS(V(\BA))\times \BA$ into $\CS(V(\BA))$, where $\Phi_a(x)=\Phi(x)\psi(a q_{\BA}(x))$.
\end{lem}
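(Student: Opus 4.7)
The plan is to exploit the restricted tensor product structure of $\CS(V(\BA))$. Fix a family of $O_{E,v}$-lattices $\{L_v\}_v$ in $V(F_v)$ so that, for almost all $v$, the lattice $L_v$ is self-dual with respect to the pairing $(x,y)\mapsto\psi_v(\tr_{E/F}(x,y))$; this forces $(L_v,L_v)\subset O_{E,v}$, hence $(x,x)\in O_v$ for $x\in L_v$ (since $(x,x)\in F$), and $\psi_v|_{O_v}=1$ at those places. Such a choice is standard at places where $E/F$ is unramified, the hermitian form has unit discriminant, and $\psi_v$ has conductor $O_v$. These lattices realize $\CS(V(\BA))=\varinjlim_S\bigl(\bigotimes_{v\in S}\CS(V(F_v))\bigr)\otimes\mathbf{1}_{L^S}$, with $L^S=\prod_{v\notin S}L_v$ and $S$ ranging over finite sets of places; the topology is the inductive limit topology.

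With this setup, any convergent sequence $(\Phi^{(n)},a^{(n)})\to(\Phi,a)$ in $\CS(V(\BA))\times\BA$ may, after enlarging a finite set $S$, be assumed to satisfy: all $\Phi^{(n)}$ and $\Phi$ lie in the same step of the inductive limit with $S$-components converging in $\bigotimes_{v\in S}\CS(V(F_v))$, and $a^{(n)}_v,a_v\in O_v$ for $v\notin S$ with $a^{(n)}_v\to a_v$ in $F_v$ for each $v\in S$. For $v\notin S$ and $x\in L_v$ we will have $a_v(x,x)\in O_v$, so $\psi_v(a_v(x,x))=1$ and $(\mathbf{1}_{L_v})_{a_v}=\mathbf{1}_{L_v}$. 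Consequently the twist by $\psi(a\,q_{\BA}(\cdot))$ factors as a product in which every local factor outside $S$ is the identity, reducing the global continuity to the continuity of the finite collection of local twist maps $(\Phi_v,a_v)\mapsto \Phi_v(\cdot)\psi_v(a_v(\cdot,\cdot))$ at the places $v\in S$.

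For a single non-archimedean place $v$, $\CS(V(F_v))$ is a strict inductive limit of finite-dimensional spaces of locally constant functions with fixed compact support and fixed level of local constancy. In a convergent family, the supports stay within a common compact open set $K_v\subset V(F_v)$; on $K_v$ the values of $(x,x)$ are bounded, so the character $x\mapsto\psi_v(a_v(x,x))$ is locally constant at a level depending only on $K_v$ and $a_v$, and this dependence on $a_v$ is uniformly continuous by the uniform continuity of $\psi_v$ on bounded sets. Combined with the uniform convergence $\Phi^{(n)}_v\to\Phi_v$, this yields the desired local continuity.

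The only real obstacle is the very first step: verifying that almost all places admit a lattice simultaneously compatible with the hermitian form and with the conductor of $\psi$. This is a routine fact about hermitian modules at good places, and is in fact the same sort of setup used in Haris \cite{Haris} for the orthogonal case; once it is in hand, the twist outside $S$ is automatically trivial and the remaining verification is a standard continuity check for a character twist on finite-dimensional spaces of locally constant functions.
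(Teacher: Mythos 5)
Your argument is correct; it is the standard verification via the restricted tensor product structure, reducing to finitely many local twists (trivial outside a large finite set $S$ because $a_v(x,x)\in O_v$ there) and then checking that a character twist preserves support and level of local constancy uniformly on compact sets. The paper itself disposes of this lemma with the single line ``This is obvious,'' so your write-up simply supplies the routine details the author chose to omit; there is no gap.
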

\begin{proof}
This is obvious.
\end{proof}

\begin{lem}
The mapping $q_{\BA}$ satisfies ``condition $(B_1)$'', i.e. the series $\sum_{a\in F}|F_{\Phi}^{*}(a)|$ is uniformly convergent on every compact subset of $\CS(V(\BA))$.
\end{lem}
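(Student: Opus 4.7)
The plan is to factor $F^*_\Phi$ as an Eulerian product, bound each local factor using Lemma~2.3, and sum the resulting estimate over $F$ via a height argument in the function field.

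First I reduce to factorizable Schwartz-Bruhat functions. Since $\CS(V(\BA))$ is a strict inductive limit over pairs consisting of a finite set of places $S$ and a choice of $O_v$-lattices $L_v\subset V(F_v)$ for $v\notin S$, any compact subset $K\subset\CS(V(\BA))$ is contained in the subspace $\CS(V(F_S))\otimes\bigotimes_{v\notin S}\BC\cdot\mathbf{1}_{L_v}$ for some fixed $(S,\{L_v\})$; enlarging $S$ if necessary, I arrange that the hermitian form, the character $\psi_v$, the measure $dx_v$, and the lattice $L_v$ are simultaneously unramified and self-dual at each $v\notin S$. For $\Phi = \Phi_S\otimes\bigotimes_{v\notin S}\mathbf{1}_{L_v}$ in this subspace, Fubini gives
\[
F^*_\Phi(a) \;=\; F^*_{\Phi_S}(a)\cdot \prod_{v\notin S} F^*_{\mathbf{1}_{L_v}}(a), \qquad a\in F,
\]
with $\Phi_S$ varying in a compact subset of $\CS(V(F_S))$.

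Next I bound each factor. Applying Lemma~2.3 at each $v\in S$ and multiplying yields a constant $c_S$ depending only on $K$ such that $|F^*_{\Phi_S}(a)|\leq c_S\prod_{v\in S}\max(1,|a|_v)^{-m}$. For $v\notin S$, $F^*_{\mathbf{1}_{L_v}}(a) = \int_{L_v}\psi_v(a(x,x))\,dx_v$ is an unramified Gauss-type integral: when $|a|_v\leq 1$ the character $\psi_v(a(x,x))$ is identically $1$ on $L_v$ and the integral equals $\Vol(L_v,dx_v)=1$; when $|a|_v > 1$ the non-degeneracy of $(,)$, regarded as a quadratic form over $F_v$ of dimension $2m$, produces cancellation yielding the Gauss-sum bound $|F^*_{\mathbf{1}_{L_v}}(a)|\leq |a|_v^{-m}$. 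Multiplying over all places,
\[
|F^*_\Phi(a)| \;\leq\; c_S\prod_v\max(1,|a|_v)^{-m} \;=\; c_S\cdot H(a)^{-m},
\]
where $H(a) := \prod_v\max(1,|a|_v)$ is the standard multiplicative height on $F$.

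Finally I sum over $a\in F$. The term $a=0$ contributes $|F^*_\Phi(0)|\leq \int_{V(\BA)}|\Phi|\,dx$, which is uniformly bounded on $K$. For $a\in F^\times$, the product formula gives $H(a)\geq 1$, and the classical Northcott-type count in the function field $F$, whose constant field has cardinality $q$, gives $\#\{a\in F^\times: H(a)=q^n\}\ll q^{2n}$, so
\[
\sum_{a\in F^\times} H(a)^{-m} \;\ll\; \sum_{n\geq 0}q^{(2-m)n} \;<\;\infty
\]
since $m\geq 3$. Combining these estimates gives a bound on $\sum_{a\in F}|F^*_\Phi(a)|$ independent of $\Phi\in K$, proving uniform convergence. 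The principal technical obstacle is the clean unramified bound $|F^*_{\mathbf{1}_{L_v}}(a)|\leq\max(1,|a|_v)^{-m}$ with leading constant exactly $1$ (rather than merely uniformly bounded): the Euler product over infinitely many $v$ converges only if almost every local factor satisfies this clean inequality, and establishing this requires careful verification that $L_v$, $dx_v$, $\psi_v$, and the hermitian pairing are simultaneously unramified and compatible at almost all $v$.
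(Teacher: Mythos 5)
Your proof is correct and follows essentially the same route as the paper: restrict to a compact subset on which every $\Phi$ factors as $\Phi_S\otimes\bigotimes_{v\notin S}\mathbf{1}_{L_v}$, bound the local factors (with constant exactly $1$ at almost all places) to obtain $|F^{*}_{\Phi}(a)|\le c\prod_v\max(1,|a|_v)^{-m}$, and sum over $a\in F$. The only difference is that the paper outsources the final convergence of $\sum_{a\in F}\prod_v\max(1,|a|_v)^{-m}$ for $m\geq 3$ to Haris's Proposition 1, whereas you prove it directly via the Northcott-type height count $\#\{a\in F^{\times}:H(a)=q^{n}\}\ll q^{2n}$.
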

\begin{proof}
We follow the arguments on \cite[p. 230]{Haris} (see also \cite[p. 190]{Igusa}).

Let $C$ be a compact subset of $\CS(V(\BA))$. We want to show that the series $\sum_{a\in F}|F_{\Phi}^{*}(a)|$ is uniformly convergent on $C$.

Note that the adelic space $V(\BA)$ is the inductive limit of $V_T=V_T^0\times V_T^1$, where $V_T^0=\prod_{v\notin T} V_v^0$ and $V_T^1=\prod_{v\in T}V(F_v)$, as $T$ runs over all the finite sets of places of $F$, here $V_v^0=V(O_v)$.

For the compact subset $C$ of $\CS(V(\BA))$, there exists a finite set $S$ of places of $F$ and a compact subset $C_1$ of $\CS(V_S^1)$, such that every $\Phi \in C$ is of the form $\Phi=\Phi_0\otimes \Phi_1$, where $\Phi_0$ is the characteristic function of $V_S^0=\prod_{v\notin S}V_v^0$ and $\Phi_1\in C_1$.

By Lemma 2.3 and Fubini's theorem, there is a positive constant $c$ such that
\[
\sum_{a\in F}|F_{\Phi}^{*}(a)|\leq c \sum_{a\in F}\prod_v \max(1, |a|_v)^{-m}
\]
for all $\Phi \in C$.
By \cite[Prop. 1]{Haris}, the right hand side is convergent since $m\geq 3$. The desired result follows.
\end{proof}

Consequently, by Proposition 2.2, we have
\begin{prop}
The mapping $q_{\BA}: V(\BA)\ra \BA$ satisfies ``condition (B)'' and ``condition (A)''.
\end{prop}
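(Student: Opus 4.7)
The plan is to invoke Proposition 2.2(ii) directly with the data $X = V(\BA)$, $G = \BA$, $\Gamma = F$, and $f = q_{\BA}$. For this I first need to verify the ambient hypothesis on $(G, \Gamma)$: the subgroup $F \subset \BA$ is discrete with compact quotient $F \bs \BA$, and the self-duality of $\BA$ induced by $\psi$ identifies $F$ with its own annihilator, so that $\Gamma_{*} = F$ as well. This is the standard global duality for function fields, and it fits the convention already adopted in the excerpt that $\BA$ is identified with its Pontryagin dual via the pairing $\pair{a,b} \mapsto \psi(ab)$.

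With this setup in place, conditions $(B_0)$ and $(B_1)$ of Proposition 2.2(ii) translate verbatim into the statements of Lemmas 2.5 and 2.6 that have just been established. So condition (B) follows at once from Proposition 2.2(ii); and condition (A) is then a formal consequence of Proposition 2.2(i), which asserts that (B) implies (A). Thus the proof of the proposition itself is a one-line assembly of existing results.

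The main technical obstacle is therefore not located at this step but inside Lemma 2.6, namely the uniform control of $\sum_{a \in F}|F_{\Phi}^{*}(a)|$ over compact families of $\Phi$. That control reduces, via the local bound of Lemma 2.3 and the product structure of Schwartz--Bruhat functions on $V(\BA)$, to the convergence of $\sum_{a \in F}\prod_v \max(1, |a|_v)^{-m}$, which requires $m \geq 3$ and invokes Haris's Proposition 1. The role of the present proposition is simply to package these inputs in the form needed downstream: once condition (A) is available, Proposition 2.1 produces the measures $\mu_g$ and the identity $\int F_{\Phi}\,dg = \int \Phi\,dx$ globally, and once condition (B) is available, the Poisson-type identity $\sum_{a\in F} F_{\Phi}^{*}(a) = \sum_{a\in F} F_{\Phi}(a)$ (with both series absolutely convergent) follows, which is precisely the identity that matches the Eisenstein and theta expansions in the outline of the Siegel--Weil formula given in Section 1.
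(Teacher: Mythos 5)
Your proposal is correct and follows essentially the same route as the paper: the paper's proof of this proposition is precisely the one-line application of Proposition 2.2(ii), with conditions $(B_0)$ and $(B_1)$ supplied by Lemmas 2.5 and 2.6. Your additional verification that $F$ is discrete and cocompact in $\BA$ with $\Gamma_{*}=F$ under the $\psi$-self-duality is left implicit in the paper but is a correct and harmless elaboration.
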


Let $F_{\Phi}$ be the Fourier transform of $F_{\Phi}^{*}$. Then by Proposition 2.2, we have the following Poisson formula.
\begin{prop}
For $\Phi \in \CS(V(\BA))$, we have
\[
\sum_{a\in F}F_{\Phi}^{*}(a)=\sum_{a\in F}F_{\Phi}(a),
\]
and both series are absolutely convergent.
\end{prop}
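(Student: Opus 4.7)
The plan is to obtain the claim as a direct application of Weil's Proposition 2.2(i) to the mapping $q_{\BA}: V(\BA) \ra \BA$, since all the hard analytic work—verification of ``condition (B)"—has already been carried out in Proposition 2.7. What remains is purely bookkeeping: I need to identify the discrete subgroup $\Gamma$ of $G = \BA$ and its Pontryagin dual $\Gamma_*$ sitting inside $G^* \cong \BA$.

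The natural choice is $\Gamma = F$, embedded diagonally in $\BA$. That $F$ is discrete in $\BA$ with compact quotient $F \bs \BA$ is a standard fact from function field arithmetic. Under the self-dual identification $\BA \cong \BA^*$ induced by the pairing $\pair{a,b}\mapsto \psi(ab)$, the annihilator of $F$ corresponds to $F$ itself: by construction $\psi$ is trivial on $F$, and conversely any $b \in \BA$ such that $\psi(bF) = 1$ must lie in $F$, since $\psi$ is nontrivial on $\BA$ and $F$ is its own orthogonal complement under this pairing. Hence $\Gamma_* = F$ under the chosen identification.

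With these identifications in place, Proposition 2.7 tells us that $q_{\BA}$ satisfies condition (B) with respect to $\Gamma = F \subset \BA$, so Proposition 2.2(i) applies and yields both the absolute convergence of the two series and the equality
\[
\sum_{\gamma \in F} F_{\Phi}(\gamma) \;=\; \sum_{\gamma^* \in F} F_{\Phi}^{*}(\gamma^*),
\]
which is exactly the desired identity after renaming the summation variable to $a$.

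There is no genuine obstacle in this proposition, since all analytic difficulty—the uniform tail estimate of Lemma 2.3 and the compact-subset convergence bound of Lemma 2.6—was absorbed into the verification of conditions $(B_0)$ and $(B_1)$, and the arithmetic input (self-duality of $F$ inside $\BA$ via $\psi$) is standard. In spirit, this proposition is a function-field Poisson summation formula applied to the nonlinear map $q_{\BA}$, made accessible precisely by Weil's abstract framework; once that framework has been set up, the proof is essentially a verification that the hypotheses are met.
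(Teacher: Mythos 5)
Your proof is correct and is essentially the paper's own argument: the proposition is stated as an immediate consequence of Weil's criterion (Proposition 2.2(i)) applied to $q_{\BA}$ with $\Gamma=F\subset\BA$ and $\Gamma_*=F$ under the self-dual identification via $\psi$, the conditions $(B_0)$ and $(B_1)$ having been verified in the preceding lemmas. Your added remark on the self-duality of $F$ in $\BA$ only makes explicit what the paper leaves implicit.
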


Now we show that $\sum_{a\in F}F_{\Phi}^{*}(a)$ is related to the Siegel Eisenstein series $E(1,s_0,\Phi)$, where $1$ is the identity element in the group $G(\BA)$.

\begin{prop}
For $\Phi \in \CS(V(\BA))$, we have
\[
E(1,s_0,\Phi)=\Phi(0)+\sum_{a\in F}F_{\Phi}^{*}(a),
\]
and the series on the right hand side is absolutely convergent.
\end{prop}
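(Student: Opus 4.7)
The plan is to unwind the Eisenstein series at the identity using the Bruhat decomposition and then identify each summand via the explicit formulas for the Weil representation. Concretely, I will first write $P(F)\bs G(F)$ as a disjoint union of the coset represented by the identity together with cosets of the form $P(F)wn(b)$ with $b\in F$. The key point is that, by the Bruhat decomposition $G(F)=P(F)\sqcup P(F)wN(F)$ already recorded in the introduction, and since $w^{-1}P(F)w\cap N(F)=\{1\}$ (a rank-one Bruhat fact for $G=U(1,1)$), the map $b\mapsto P(F)wn(b)$ gives a bijection between $F$ and $P(F)\bs P(F)wN(F)$. Thus
\[
E(1,s_0,\Phi)=f_\Phi^{(s_0)}(1)+\sum_{b\in F}f_\Phi^{(s_0)}(wn(b)).
\]

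Next I will evaluate each term. Since $s=s_0$, the exponent in $f_\Phi^{(s)}(g)=|a(g)|^{s-s_0}\omega(g)\Phi(0)$ is zero, so $f_\Phi^{(s_0)}(g)=\omega(g)\Phi(0)$ and no Iwasawa normalization is required. The first term is $\omega(1)\Phi(0)=\Phi(0)$. For the generic Bruhat terms, I use the formulas for the Weil representation displayed in Section~1: first apply $\omega(n(b))$, giving the function $x\mapsto \psi(b(x,x))\Phi(x)$ on $V(\BA)$; then apply $\omega(w)$ and evaluate at $0\in V(\BA)$, which by the formula $\omega(w)\Phi'(x)=\int_{V(\BA)}\Phi'(y)\psi(\tr_{E/F}(y,x))dy$ produces
\[
\omega(wn(b))\Phi(0)=\int_{V(\BA)}\Phi(y)\psi(b(y,y))dy=F_{\Phi}^{*}(b),
\]
by the definition of $F_{\Phi}^{*}$. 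Summing over $b\in F$ and adding the contribution from the identity coset yields the desired identity.

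Finally, the absolute convergence of $\sum_{a\in F}F_{\Phi}^{*}(a)$ is already in hand: by Lemma~2.7 ($q_{\BA}$ satisfies condition $(B_1)$), the series $\sum_{a\in F}|F_{\Phi}^{*}(a)|$ converges (uniformly on compacta in $\CS(V(\BA))$), and this is also restated as part of Proposition~2.9. Since each partial-sum argument above consists of finitely many analytic manipulations per coset, the absolute convergence of the series is equivalent to that of the sum defining $E(1,s_0,\Phi)$, and both pieces match term-by-term.

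I expect no real obstacle: the only mildly nontrivial point is the identification of coset representatives in the Bruhat decomposition for $G=U(1,1)$, which could be stated once and for all, and the careful bookkeeping that $f_\Phi^{(s_0)}(wn(b))$ really equals $\omega(wn(b))\Phi(0)$ without any Iwasawa prefactor (true precisely because the $|a(g)|^{s-s_0}$ factor trivializes at $s=s_0$, independent of the Iwasawa decomposition of $wn(b)$). Everything else is a direct application of the explicit Weil representation formulas together with the convergence results from Section~2.2 and Section~2.3.
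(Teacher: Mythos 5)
Your proposal is correct and follows essentially the same route as the paper: Bruhat decomposition of $P(F)\bs G(F)$, the evaluations $f_{\Phi}^{(s_0)}(1)=\Phi(0)$ and $f_{\Phi}^{(s_0)}(wn(b))=F_{\Phi}^{*}(b)$ via the Weil representation formulas, and absolute convergence from the Poisson-formula proposition (condition $(B_1)$). You merely make explicit two points the paper leaves implicit --- the bijection $b\mapsto P(F)wn(b)$ and the vanishing of the $|a(g)|^{s-s_0}$ factor at $s=s_0$ --- which is harmless.
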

\begin{proof}
By Bruhat decomposition $G(F)=P(F)\bigcup P(F)wN(F)$, we have
\[
E(1,s_0,\Phi)=\sum_{\gamma \in P(F)\bs G(F)}f_{\Phi}^{(s_0)}(\gamma)=f_{\Phi}^{(s_0)}(1)+\sum_{b\in F}f_{\Phi}^{(s_0)}(wn(b)).
\]
Note that $f_{\Phi}^{(s_0)}(1)=\Phi(0)$ and $f_{\Phi}^{(s_0)}(wn(b))=F_{\Phi}^{*}(b)$.
So
\[
E(1,s_0,\Phi)=\Phi(0)+\sum_{b\in F}F_{\Phi}^{*}(b).
\]
The absolute convergence follows from the above proposition.
\end{proof}

\begin{lem}
For any $\Phi \in \CS(V(\BA))$ and $g\in G(\BA)$, the Siegel Eisenstein series $E(g,s,\Phi)$ is holomorphic at $s_0$.
\end{lem}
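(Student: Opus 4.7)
The strategy is to show that the series defining $E(g,s,\Phi)$ converges absolutely and uniformly in $s$ on a complex neighborhood of $s_0$; since each summand $f_\Phi^{(s)}(\gamma g)=|a(\gamma g)|^{s-s_0}\omega(\gamma g)\Phi(0)$ is entire in $s$, holomorphy at $s_0$ then follows from Weierstrass' theorem.

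First I would apply the Bruhat decomposition, exactly as in Proposition 2.7, to write
\[
E(g,s,\Phi)=f_\Phi^{(s)}(g)+\sum_{b\in F}f_\Phi^{(s)}(wn(b)g).
\]
The first term is manifestly entire in $s$. For each term of the series, the Weil representation formulas give $\omega(wn(b)g)\Phi(0)=F_{\omega(g)\Phi}^{*}(b)$, and hence
\[
f_\Phi^{(s)}(wn(b)g)=|a(wn(b)g)|^{s-s_0}\,F_{\omega(g)\Phi}^{*}(b).
\]
At $s=s_0$ the extra factor is trivial and absolute summability in $b$ is precisely Proposition 2.7 applied to $\omega(g)\Phi\in\CS(V(\BA))$.

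For $s$ in a disk $U$ around $s_0$ and a small $\epsilon>0$ depending on $U$, I would bound $|a(wn(b)g)|^{\Re(s)-s_0}\leq |a(wn(b)g)|^{\epsilon}+|a(wn(b)g)|^{-\epsilon}$. An Iwasawa-decomposition calculation, using the standard identity $|a(h)|=\max(|c|,|d|)^{-1}$ for the bottom row $(c,d)$ of $h$, is straightforward at $g=1$ and yields $|a(wn(b))|=\prod_v\max(1,|b|_v)^{-1}$; the general $g$ case follows by a compactness argument and gives $|a(wn(b)g)|\asymp\prod_v\max(1,|b|_v)^{-1}$ with implied constants depending only on $g$. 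Combined with the global bound $|F_{\omega(g)\Phi}^{*}(b)|\leq c\prod_v\max(1,|b|_v)^{-m}$ obtained by multiplying the local estimate of Lemma 2.3 over all places (as in the proof of Lemma 2.9), the $b$-th summand is majorized by a constant times $\prod_v\max(1,|b|_v)^{-(m-\epsilon)}$, uniformly for $s\in U$.

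Finally, by the function-field height-counting estimate used in the proof of Lemma 2.9 via \cite[Prop. 1]{Haris}, the sum $\sum_{b\in F}\prod_v\max(1,|b|_v)^{-\alpha}$ converges for $\alpha>2$; since $m\geq 3$, choosing $\epsilon\in(0,m-2)$ gives $m-\epsilon>2$ and hence absolute and uniform convergence on $U$, so Weierstrass' theorem yields holomorphy at $s_0$. The main obstacle is the uniform Iwasawa comparison $|a(wn(b)g)|\asymp\prod_v\max(1,|b|_v)^{-1}$: immediate at $g=1$ from the matrix formula, but requiring a compactness reduction on the Iwasawa decomposition for general $g\in G(\BA)$.
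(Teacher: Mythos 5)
Your proposal is correct and shares the paper's skeleton: Bruhat decomposition of $E(g,s,\Phi)$ into $f_\Phi^{(s)}(g)+\sum_{b\in F}f_\Phi^{(s)}(wn(b)g)$, a uniform-in-$s$ convergent majorant on a neighborhood of $s_0$, and Weierstrass' theorem. The difference lies in how the majorant is produced. The paper does not recompute any decay: it invokes a bound $|a(wn(b)g)|\leq C(g)|a(g)|$ (modeled on Knapp's Lemma 6.7) to absorb the factor $|a(wn(b)g)|^{s-s_0}$ into a constant, and then majorizes by $\sum_{b}|\omega(wn(b)g)\Phi(0)|$, whose convergence is exactly the already-established absolute convergence of $E(g,s_0,\Phi)$ (Propositions 2.7 and 2.9). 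You instead rebuild an explicit majorant from scratch: the Iwasawa computation $|a(wn(b)g)|\asymp_g\prod_v\max(1,|b|_v)^{-1}$, the product of the local estimates of Lemma 2.3 giving $|F^{*}_{\omega(g)\Phi}(b)|\leq c\prod_v\max(1,|b|_v)^{-m}$, and Haris's counting proposition for $\sum_b\prod_v\max(1,|b|_v)^{-\alpha}$ with $\alpha>2$. Your route is longer but self-contained, and it is more careful on one point: since $|a(wn(b)g)|$ is typically $\leq 1$, the factor $|a(wn(b)g)|^{\Re(s)-s_0}$ is large precisely when $\Re(s)<s_0$, so a one-sided bound $|a|^{\Re(s)-s_0}\leq|a|^{r}$ is not available; your two-sided bound $|a|^{\epsilon}+|a|^{-\epsilon}$, paid for by the surplus decay $m-\epsilon>2$, handles this cleanly, whereas the paper's displayed inequality elides it. The only cost is that you must actually verify the height comparison $|a(wn(b)g)|\asymp_g\prod_v\max(1,|b|_v)^{-1}$ for general $g$ (a routine Iwasawa manipulation plus the standard comparison of heights under affine change of variable), and you should keep track of whether $|\cdot|$ means the $\BA_E$- or $\BA$-absolute value (this only improves the exponent, so convergence is unaffected).
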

\begin{proof}
Note that $E(g,s_0,\Phi)= E(1,s_0,\omega(g)\Phi)$ is absolutely convergent.

In general, $E(g,s,\Phi)=f_{\Phi}^{(s)}(g)+\sum_{b\in F}f_{\Phi}^{(s)}(wn(b)g)$ is a series of holomorphic functions in $s$. To show that $E(g,s,\Phi)$ is holomorphic at $s_0$, by a theorem of Weierstrass (\cite[p. 177]{Ahlfors}), it suffices to show that the series converges uniformly on every compact neighborhood of $s_0$. If $|s-s_0|\leq r$, then
\[
\sum_{b\in F}|f_{\Phi}^{(s)}(wn(b)g)|\leq \sum_{b\in F}|a(wn(b)g)|^{r}|\omega(wn(b)g)\Phi(0)|.
\]
Similar to \cite[Lem. 6.7]{Knapp}, we can show that there exists a positive constant $C=C(g)$ such that \[
|a(wn(b)g)|\leq C|a(g)|
\]
for all $b\in F$ . So for $|s-s_0|\leq r$ we have
\[
\sum_{b\in F}|f_{\Phi}^{(s)}(wn(b)g)|\leq C^r |a(g)|^r \sum_{b\in F} |\omega(wn(b)g)\Phi(0)|.
\]
Note that $\sum_{b\in F} \omega(wn(b)g)\Phi(0)=E(g,s_0,\Phi)-\omega(g)\Phi(0)$ is absolutely convergent. It follows that $\sum_{b\in F}f_{\Phi}^{(s)}(wn(b)g)$ is uniformly convergent on the disc $|s-s_0|\leq r$ by the Weierstrass M test. Thus $E(g,s,\Phi)$ is holomorphic at $s=s_0$.
\end{proof}

Next we show that $\sum_{a\in F}F_{\Phi}(a)$ is related to the theta integral $I(1,\Phi)$.
For $a\in F$, let $V_a(F)=\{x \in V(F): x\neq 0, q(x)=a\}$. When $V_a(F)\neq \emptyset$, choose $\xi_a\in V_a(F)$ and let $H_a$ be the stabilizer of $H$ at $\xi_a$, i.e. $H_a=\{h\in H: h\xi_a=\xi_a\}$. Thus $V_a$ and $H_a$ are algebraic groups over $F$. By Witt's theorem, $V_a(F)\cong H_a(F)\bs H(F)$. Moreover, by \cite[Thm. A(ii)]{BS} and \cite[Thm. 2.4.2]{Weil82}, we have $V_a(\BA)\cong H_a(\BA)\bs H(\BA)$.

We can describe the structure of $H_a$ as follows. We say a group is of type $U_n$ if it is the unitary group associated to a hermitian space of dimension $n$, and we say a group is of type $\BG_a$ if it is isomorphic to $\BG_a^r$ for some positive integer $r$.
\begin{lem}
Assume $V_a(F)\neq \emptyset$.

(i) If $a\neq 0$, then $H_a$ is of type $U_{m-1}$.

(ii) If $a=0$, then $H_a$ is isomorphic to the semidirect product of a group of type $U_{m-2}$ and a group $H'$, where $H'$ is the semidirect product of two groups of type $\BG_a$.
\end{lem}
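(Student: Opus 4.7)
My plan is to handle the two cases by exploiting a Witt decomposition of $V$ adapted to $\xi_a$. For part (i), with $a\neq 0$, the vector $\xi_a$ satisfies $(\xi_a,\xi_a)=a\neq 0$, hence is anisotropic; the line $E\xi_a$ is then non-degenerate, so $V=E\xi_a\oplus\xi_a^{\perp}$ as an orthogonal sum of hermitian spaces with $\xi_a^{\perp}$ non-degenerate of dimension $m-1$. Any $h\in H$ fixing $\xi_a$ preserves $\xi_a^{\perp}$ and restricts to an isometry there, and conversely every isometry of $\xi_a^{\perp}$ extends to an element of $H$ that acts trivially on $E\xi_a$. This identifies $H_a$ with $U(\xi_a^{\perp})$, a group of type $U_{m-1}$.

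For part (ii), with $a=0$, the vector $\xi_0$ is isotropic. The first step is to produce a hyperbolic partner: pick $\eta'\in V$ with $(\xi_0,\eta')=1$, then set $\eta=\eta'+c\xi_0$ for some $c\in E$ with $c+\bar c=-(\eta',\eta')$; surjectivity of $\tr_{E/F}$ provides such $c$, and one verifies $(\eta,\eta)=0$ and $(\xi_0,\eta)=1$. Setting $W=\langle \xi_0,\eta\rangle^{\perp}$ gives an orthogonal decomposition $V=E\xi_0\oplus W\oplus E\eta$ with $W$ non-degenerate of dimension $m-2$. Writing $h\in H_a$ in block form with respect to this basis and imposing the unitarity relation, a direct block computation shows that the entries of $h$ are parameterized by a triple $(C,d,b)$ with $C\in U(W)$, $d\in W$, and $b\in E$ satisfying $b+\bar b=-(d,d)$, all other entries being uniquely determined.

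Reading off this parametrization, the subgroup of elements with $d=0$ and $b=0$ is a copy of $U(W)$ (of type $U_{m-2}$), while the subgroup $H'$ of elements with $C=I$ is normal in $H_a$ with $U(W)$-action $C\cdot(d,b)=(Cd,b)$; hence $H_a=H'\rtimes U(W)$. To describe $H'$ internally, I would read off from matrix multiplication that its group law has the Heisenberg-like form $(d_1,b_1)(d_2,b_2)=(d_1+d_2,\,b_1+b_2-\langle d_1,d_2\rangle)$ for the sesquilinear pairing $\langle\cdot,\cdot\rangle$ induced by the form on $W$. The center $Z=\{(0,b):b+\bar b=0\}$ is the trace-zero part of $E$, a one-dimensional $F$-group of type $\BG_a$, and the projection $(d,b)\mapsto d$ identifies $H'/Z$ with the additive group of $W$, also of type $\BG_a$. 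This realizes $H'$ as an extension of one group of type $\BG_a$ by another, yielding the stated description. The only real obstacle is carrying out the block computation cleanly in part (ii); the rest follows routinely from Witt's theorem and the coordinates.
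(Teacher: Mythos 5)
Your argument for part (i) is correct, and your block computation in part (ii) correctly identifies $H_a$ as $H'\rtimes U(W)$ with $H'=\{(d,b): d\in W,\ b\in E,\ b+\bar b=-(d,d)\}$ carrying the Heisenberg-type law $(d_1,b_1)(d_2,b_2)=(d_1+d_2,\,b_1+b_2-(d_2,d_1))$. (The paper itself gives no details, merely citing the analogous orthogonal-group computation in Weil, so your level of detail is welcome.) However, your last step has a genuine gap: you exhibit $H'$ as a \emph{central extension} $1\ra Z\ra H'\ra W\ra 1$ with $Z\cong\BG_a$ and $W\cong\BG_a^{2(m-2)}$, and then assert this "yields the stated description." A central extension is a semidirect product only if it splits, and this one does not: the commutator of $(d_1,b_1)$ and $(d_2,b_2)$ is $\bigl(0,(d_1,d_2)-(d_2,d_1)\bigr)$, which is nonzero for suitable $d_1,d_2$ since the form on $W$ is non-degenerate and $m\geq 3$; so $H'$ is non-abelian, whereas a semidirect product over a central kernel would be a direct product of abelian groups. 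So the decomposition you wrote down is not the one the lemma asserts.

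The statement is nevertheless true, and the fix stays within your framework: choose $\delta\in E$ with $\bar\delta=-\delta$ and diagonalize the hermitian form on $W$ so that $W=W_1\oplus W_2$ as $F$-vector spaces with $W_1$ spanned over $F$ by an orthogonal $E$-basis and $W_2=\delta W_1$; on each $W_i\times W_i$ the form is then $F$-valued and symmetric, so all commutators within the preimage of $W_i$ vanish. Take $A=\{(d,b)\in H': d\in W_1\}$, which contains $Z\supseteq [H',H']$ and is therefore normal, and is an $F$-vector group (hence of type $\BG_a$) via $(d,b)\mapsto (d,\,b+\tfrac12(d,d))$; take $B=\{(d,-\tfrac12(d,d)): d\in W_2\}$, a subgroup isomorphic to the vector group $W_2$ precisely because the form is symmetric on $W_2$. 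Then $A\cap B=1$ and $AB=H'$, giving $H'=A\rtimes B$ with both factors of type $\BG_a$. (A further minor point worth a sentence in positive characteristic: "abelian unipotent" does not imply "isomorphic to $\BG_a^r$" over a non-perfect field, so one should note, as above, that $A$ and $B$ are literally $F$-vector groups rather than merely smooth connected abelian unipotent groups.)
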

\begin{proof}
Similar to the proof of \cite[Lem. 4.1.1]{Weil82}.
\end{proof}

\begin{cor}
Assume $V_a(F)\neq \emptyset$. Then $(1)$ is a system of convergence factors for $V_a$. Moreover, $\tau(H)=\tau(H_a)=2$ and $\tau(V_a)=1$.
\end{cor}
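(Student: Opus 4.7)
The plan is to combine the structural description of $H_a$ given in Lemma 2.10 with the standard Tamagawa-number formula for classical groups over a global field. The two key inputs are (i) $\tau(U_n) = 2$ for any unitary group of rank $n \geq 1$ attached to a non-degenerate hermitian space over $F$ (the function-field analogue of Weil's formula, available in \cite{Weil82}), and (ii) $\tau(\BG_a) = 1$ (immediate from Riemann--Roch), together with the multiplicativity of Tamagawa numbers in short exact sequences of connected algebraic groups whose kernel is unipotent (no Galois-cohomological obstruction appears, since $H^1$ of a unipotent group vanishes locally and globally in the relevant sense).

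Given these inputs, $\tau(H) = 2$ is immediate, since $H = U(V)$ is of type $U_m$. For $\tau(H_a)$ we split according to $a$. If $a \neq 0$, then Lemma 2.10(i) gives that $H_a$ is of type $U_{m-1}$, so $\tau(H_a) = 2$ directly. If $a = 0$, then Lemma 2.10(ii) exhibits $H_a$ as a semidirect product of a group of type $U_{m-2}$ and a unipotent group $H'$ which is itself a semidirect product of two groups of type $\BG_a$; this yields a short exact sequence $1 \ra H' \ra H_a \ra U_{m-2} \ra 1$. Applying $\tau(\BG_a)=1$ and multiplicativity repeatedly gives $\tau(H') = 1$, and then $\tau(H_a) = \tau(U_{m-2}) \cdot \tau(H') = 2 \cdot 1 = 2$.

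For $V_a$, the identification $V_a \cong H_a \bs H$ together with $V_a(\BA) \cong H_a(\BA) \bs H(\BA)$ (already cited from \cite{BS} and \cite{Weil82}) shows that a gauge form on $V_a$ is obtained as the quotient of gauge forms on $H$ and $H_a$. Correspondingly, the Tamagawa measure on $V_a(\BA)$ is the quotient of those on $H(\BA)$ and $H_a(\BA)$. The claim that $(1)$ is a system of convergence factors for $V_a$ amounts to saying that the canonical Artin local $L$-factors of $H$ and $H_a$ cancel in the quotient; granting this, the measures on $V_a$ form a convergent product without additional correction, and $\tau(V_a) = \tau(H)/\tau(H_a) = 2/2 = 1$ follows formally.

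The principal obstacle is the cancellation of Artin factors in the last step: everything else reduces either to a citation or to a routine application of the multiplicative behaviour of Tamagawa numbers in unipotent extensions. For $a \neq 0$ the cancellation is clean because both $H$ and $H_a$ are unitary; for $a = 0$ one must also absorb the trivial contribution coming from the unipotent part $H'$, which introduces only factors of $\zeta_{F_v}(1)$-type that are already accounted for in Oesterl\'e's normalisation. This parallels Weil's treatment of affine quadrics in \cite[Ch. II]{Weil82}, and so the argument transports to the function-field setting without essential change.
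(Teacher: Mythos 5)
Your proposal is correct and follows essentially the same route as the paper: the structure of $H_a$ from Lemma 2.10, triviality of the unipotent contributions, multiplicativity of Tamagawa numbers and convergence factors in the relevant exact sequences, and the quotient formula for $V_a \cong H_a \backslash H$ giving $\tau(V_a)=\tau(H)/\tau(H_a)=1$. The only step you compress is the input $\tau(U_n)=2$, which \cite{Weil82} does not state directly --- the paper assembles it from $\tau(SU(V))=1$ and $\tau(E^1)=2$ (the norm-one torus, with $L(s,E^1)=L(s,\epsilon)$) via the determinant sequence $1\to SU(V)\to U(V)\to E^1\to 1$.
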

\begin{proof}
First note that $(1)$ is a system of convergence factors for a group of type $\BG_a$, so $H_a$ has the same system of convergence factors as $H$ and $(1)$ is a system of convergence factors for $V_a\cong H_a\bs H$ by \cite[Thm. 2.4.3]{Weil82}.

Next note that the Tamagawa number of the unitary $H=U(V)$ is equal to 2: we know that $\tau(SU(V))=1$ (see \cite[Thm. 4.4.1]{Weil82}) and $\tau(E^1)=2$ (see \cite[p. 65]{Weil82} and \cite[Definition 4.7]{Oes}), where $E^1=\{a\in E: a\bar{a}=1\}$, by noting that the Artin $L$-function $L(s, E^1)=L(s,\epsilon)$; so $\tau(H)=\tau(SU(V)) \cdot \tau(E^1)=2$ by \cite[Corollary to Thm. 2.4.4]{Weil82}. Also note that the Tamagawa number of a group of type $\BG_a$ is equal to 1, so the Tamagawa number of $H_a$ is the same as that of a unitary group by \cite[Corollary to Thm. 2.4.4]{Weil82} and hence $\tau(H_a)=2$.
\end{proof}

Next we study $F_{\Phi}$. Since $q_{\BA}: V(\BA) \ra \BA$ satisfies ``condition (A)'', by Proposition 2.1, for each $a\in F$, there is a positive measure $\mu_a$ on $V(\BA)$, of support contained in $q_{\BA}^{-1}(\{a\})$, such that
\[
F_{\Phi}(a)=\int_{V(\BA)} \Phi d\mu_a.
\]
Since $q_{\BA}^{-1}(\{a\})=V_a(\BA)\cup \{0\}$ and $\mu_a(\{0\})=0$, we have
\[
 F_{\Phi}(a)=\int_{q_{\BA}^{-1}(\{a\})}\Phi d\mu_a=\int_{V_a(\BA)}\Phi d\mu_a.
\]
Note that if $V_a(F)=\emptyset$, then $V_a(\BA)=\emptyset$ by the Hasse-Minkowski principle for quadratic forms (see \cite[p. 223]{Scharlau} or \cite[p. 170]{Lam}), and hence $F_{\Phi}(a)=0$.

Next we identify the measures $\mu_a$. For $a\in F$, consider the gauge form $\theta_a(x)=(dx/d(q(x)))_a$ on $V_a$ (see \cite[p. 12]{Weil65}). Let $|\theta_a|_{\BA}$ be the Tamagawa measure on $V_a(\BA)$ derived from $\theta_a$.
\begin{lem}
For all $\Phi \in \CS(V(\BA))$, we have
\[
F_{\Phi}(a)=\int_{V_a(\BA)}\Phi |\theta_a|_{\BA}, \quad \forall a\in F.
\]
\end{lem}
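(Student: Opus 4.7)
The plan is to parallel the proof of Lemma 2.4. Since $q_\BA$ satisfies condition (A) by Proposition 2.9, Proposition 2.1 provides for each $a \in \BA$ a tempered measure $\mu_a$ on $V(\BA)$, supported in $q_\BA^{-1}(\{a\}) = V_a(\BA)\cup\{0\}$, such that for every continuous $\Phi$ on $V(\BA)$ with compact support,
\[
F_\Phi(a) = \int_{V(\BA)} \Phi\,d\mu_a \qquad\text{and}\qquad \int_\BA F_\Phi(a)\,da = \int_{V(\BA)} \Phi(x)\,dx.
\]
As in the local setting of Lemma 2.4, $\mu_a(\{0\}) = 0$, so $F_\Phi(a) = \int_{V_a(\BA)}\Phi\,d\mu_a$.

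The goal reduces to showing that $\mu_a = |\theta_a|_\BA$ on $V_a(\BA)$, and by the uniqueness built into the disintegration characterization in Proposition 2.1, it suffices to verify the global disintegration identity
\[
\int_{V(\BA)}\Phi(x)\,dx = \int_\BA \left(\int_{V_a(\BA)} \Phi\,|\theta_a|_\BA \right) da \qquad (\star)
\]
for $\Phi$ continuous of compact support. This is the global counterpart of the local identity recalled in \S2.2 (from p.~13 of \cite{Weil65}). To prove it, I would invoke Weil's Tamagawa formalism applied to the exact sequence of $F$-varieties $0 \to V_a \to V \to \BG_a \to 0$, where the last map is $q$: by Corollary 2.11, $(1)$ is a convergence factor system for $V_a$, so $|\theta_a|_\BA = \prod_v |\theta_a|_v$; the self-dual measures on $V(\BA)$ and $\BA$ are themselves products of local self-dual measures; and the local identity from \S2.2 holds at every place. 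Evaluating $(\star)$ on pure tensors $\Phi = \otimes_v \Phi_v$ decomposes it as a product of local identities, which yields $(\star)$ on pure tensors, and then on all continuous compactly supported $\Phi$ by density.

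With $(\star)$ in hand, uniqueness in Proposition 2.1 forces $\mu_a = |\theta_a|_\BA$, and hence $F_\Phi(a) = \int_{V_a(\BA)}\Phi\,|\theta_a|_\BA$ for every continuous compactly supported $\Phi$. To pass to arbitrary $\Phi \in \CS(V(\BA))$, both sides are continuous in $\Phi$ in the Schwartz topology --- the left-hand side by Proposition 2.1, the right-hand side because $|\theta_a|_\BA$ is tempered (being equal to $\mu_a$) --- so the identity extends by density.

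The main obstacle is the verification of $(\star)$. Intuitively this is just the adelization of Weil's local integration formula, but rigorously one must use the convergence-factor analysis of Corollary 2.11 to identify $|\theta_a|_\BA$ with an honest product of local measures, and then patch the local identities into a global one on pure tensors. Once $(\star)$ is established the rest of the argument is essentially formal, parallel to the local case.
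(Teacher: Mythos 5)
Your overall strategy---reduce to pure tensors, use the local identity and the convergence-factor statement to identify the global measure---is in the right spirit, but the route through the global disintegration identity $(\star)$ plus ``uniqueness in Proposition 2.1'' has a genuine gap. Uniqueness of a disintegration $\{\mu_a\}_{a\in\BA}$ of $dx$ along $q_{\BA}$ determines $\mu_a$ only for almost every $a\in\BA$ with respect to Haar measure; to pin it down at a \emph{specific} $a$ you need both $a\mapsto\int\Phi\,d\mu_a$ and the candidate $a\mapsto\int_{V_a(\BA)}\Phi\,|\theta_a|_{\BA}$ to be continuous on $\BA$. Proposition 2.1 gives continuity of the former, but you never address continuity of the latter (an infinite product of local fiber integrals varying with $a$), and this is exactly where the difficulty sits: $F$ is a null set in $\BA$, so an almost-everywhere identification says nothing about the points $a\in F$ that the lemma is about. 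There is also the subtlety that for $a\in\BA\setminus F$ the fiber $V_a$ is not an $F$-variety, so the symbol $|\theta_a|_{\BA}$ appearing in $(\star)$ must itself be \emph{defined} as $\prod_v|\theta_{a_v}|_v$, which makes the required continuity even less automatic.

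The paper sidesteps all of this by working with the Fourier transform directly rather than with the disintegration: for a pure tensor $\Phi=\prod_v\Phi_v$ one has $F^{*}_{\Phi}(a)=\prod_v F^{*}_{\Phi_v}(a_v)$, hence $F_{\Phi}(a)=\prod_v F_{\Phi_v}(a)$ for \emph{every individual} $a$; the local lemma (Lemma 2.5) identifies each factor as $\int_{V_a(F_v)}\Phi_v|\theta_a|_v$, and the convergence-factor statement (Corollary 2.13) gives $|\theta_a|_{\BA}=\prod_v|\theta_a|_v$, so the claim holds at each $a\in F$ with no almost-everywhere caveat; density in $\CS(V(\BA))$ finishes. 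To salvage your route you would have to prove continuity of $a\mapsto\int_{V_a(\BA)}\Phi|\theta_a|_{\BA}$ on $\BA$, which in practice reduces to the same local estimates and product decomposition---so the direct factorization of $F_{\Phi}$ is both shorter and strictly easier. (Minor point: your citations are misnumbered relative to the paper---the local identity is Lemma 2.5, global condition (A) is Proposition 2.8, and the convergence-factor corollary is 2.13.)
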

\begin{proof}
We follow the arguments in the proof of \cite[Lem. 3]{Haris} (see also \cite[p. 191]{Igusa}).

It suffices to show the equality for $\Phi$ restricted to a subset of $\CS(V(\BA))$ which spans a dense subset of $\CS(V(\BA))$.

Take $\Phi=\prod_v \Phi_v$, where each $\Phi_v \in \CS(V(F_v))$ and $\Phi_v$ is the characteristic function of $V(O_v)$ for almost all $v$. Then $F_{\Phi}(a)=\prod_v F_{\Phi_v}(a)$,
where $F_{\Phi_v}$ is the Fourier transform of $F^{*}_{\Phi_v}$.
By Lemma 2.5, $F_{\Phi_v}(a)=\int_{V_a(F_v)} \Phi_v |\theta_a|_v$. Moreover, since $(1)$ is a system of convergence factors for $V_a$, $|\theta_a|_{\BA}=\prod_v |\theta_a|_v$. Thus
\[
F_{\Phi}(a)=\prod_v F_{\Phi_v}(a)=\prod_v \int_{V_a(F_v)} \Phi_v |\theta_a|_v=\int_{V_a(\BA)} \Phi |\theta_a|_{\BA}.
\]
\end{proof}

For $a\in F$, let $D_a$ be the gauge form on $V_a$ which is the image of $\frac{\eta}{\eta_a}$ under the isomorphism $V_a\cong H_a\bs H$, where $\eta$ (resp. $\eta_a$) is a left invariant gauge form on $H$ (resp. on $H_a$). Let $|D_a|_{\BA}$ be the Tamagawa measure on $V_a(\BA)$ derived from $D_a$.
\begin{lem}
For $a\in F$ with $V_a(F)\neq \emptyset$, let $H_a$ be the stabilizer of $H$ at any element in $V_a(F)$. Then
\[
\int_{[H]}\left(\sum_{\xi \in V_a(F)}\Phi(h^{-1}\xi)\right)dh=\tau(H_a)\int_{V_a(\BA)}\Phi |D_a|_{\BA}.
\]
Note that if $V_a(F)=\emptyset$, then $V_a(\BA)=\emptyset$  and the two integrals above are all zero.
\end{lem}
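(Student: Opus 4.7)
The plan is a standard orbital unfolding argument, using the Witt-theoretic identification $V_a(F) \cong H_a(F)\bs H(F)$ together with the multiplicativity of Tamagawa measures through the exact sequence $H_a \to H \to V_a$.

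First I would fix $\xi_a \in V_a(F)$ and use Witt's theorem (as cited in the text right before Lemma 2.11) to write every $\xi \in V_a(F)$ uniquely as $\xi = \gamma^{-1}\xi_a$ with $\gamma$ running over $H_a(F)\bs H(F)$. Substituting into the sum gives
\[
\sum_{\xi \in V_a(F)}\Phi(h^{-1}\xi) = \sum_{\gamma \in H_a(F)\bs H(F)}\Phi((\gamma h)^{-1}\xi_a).
\]
Integrating against $dh$ on $[H]=H(F)\bs H(\BA)$ and unfolding in the usual way yields
\[
\int_{[H]}\sum_{\xi \in V_a(F)}\Phi(h^{-1}\xi)\,dh = \int_{H_a(F)\bs H(\BA)}\Phi(h^{-1}\xi_a)\,dh,
\]
provided the relevant sums and integrals converge absolutely (for the application to $\CS$-functions this is standard, and in any case can be reduced to the case of $\Phi \ge 0$ where Fubini–Tonelli applies freely).

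Next I would factor $H_a(F)\bs H(\BA)$ through $H_a(\BA)$. Writing $dh = d\bar h \cdot dh_a$ with $dh_a$ the Tamagawa measure on $H_a(\BA)$ and $d\bar h$ the induced quotient measure on $H_a(\BA)\bs H(\BA)$, and noting that $h \mapsto \Phi(h^{-1}\xi_a)$ factors through the coset space $H_a(\BA)\bs H(\BA)$ (since $h_a \xi_a = \xi_a$), we obtain
\[
\int_{H_a(F)\bs H(\BA)}\Phi(h^{-1}\xi_a)\,dh = \mathrm{Vol}(H_a(F)\bs H_a(\BA))\int_{H_a(\BA)\bs H(\BA)}\Phi(h^{-1}\xi_a)\,d\bar h = \tau(H_a)\int_{H_a(\BA)\bs H(\BA)}\Phi(h^{-1}\xi_a)\,d\bar h.
\]

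Finally I would transport the integral on $H_a(\BA)\bs H(\BA)$ to one on $V_a(\BA)$ via the map $h \mapsto h^{-1}\xi_a$, which is a homeomorphism by the adelic Witt theorem quoted in the text ($V_a(\BA)\cong H_a(\BA)\bs H(\BA)$ using \cite[Thm.~A(ii)]{BS} and \cite[Thm.~2.4.2]{Weil82}). By the very definition of the gauge form $D_a$ as the image of $\eta/\eta_a$ under the isomorphism $V_a \cong H_a\bs H$, the local measures transport correctly; and since $(1)$ is a system of convergence factors for $V_a$ (Corollary 2.12), the induced Tamagawa measure on $V_a(\BA)$ is exactly $|D_a|_{\BA} = \prod_v |D_a|_v$. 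This yields the stated equality
\[
\int_{[H]}\sum_{\xi \in V_a(F)}\Phi(h^{-1}\xi)\,dh = \tau(H_a)\int_{V_a(\BA)}\Phi\,|D_a|_{\BA}.
\]

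The main technical obstacle is the clean factorization of Tamagawa measures through the quotient map $H \to V_a$, which requires knowing that a compatible choice of convergence factors exists for all three groups; this is exactly the content of Corollary 2.12, so the bookkeeping is routine once that corollary is in hand. The vanishing statement when $V_a(F)=\emptyset$ is automatic since the inner sum is empty and, by the Hasse–Minkowski principle cited earlier, $V_a(\BA)=\emptyset$ as well.
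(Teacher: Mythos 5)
Your proposal is correct and follows essentially the same route as the paper: unfold the sum via the Witt-theoretic bijection $V_a(F)\cong H_a(F)\bs H(F)$, factor the integral through $H_a(\BA)$ to extract $\tau(H_a)$, and identify the quotient measure on $H_a(\BA)\bs H(\BA)$ with $|D_a|_{\BA}$ under the adelic isomorphism. No gaps.
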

\begin{proof}
Assume $V_a(F)\neq \emptyset$. Choose $\xi_a \in V_a(F)$. Since $H_a(F)\bs H(F)\cong V_a(F)$ via $h\mapsto h^{-1}\xi_a$, we have
\[
\int_{[H]}\left(\sum_{\xi \in V_a(F)}\Phi(h^{-1}\xi)\right)dh=\int_{H_a(F)\bs H(\BA)} \Phi(h^{-1}\xi_a)dh.
\]

Now
\[\begin{aligned}
\int_{H_a(F)\bs H(\BA)} \Phi(h^{-1}\xi_a)dh
&=\int_{H_a(\BA)\bs H(\BA)}\int_{H_a(F)\bs H_a(\BA)}\Phi(\bar{h}^{-1}h_a^{-1}\xi_a)d\bar{h}~dh_a \\
&=\tau(H_a)\int_{H_a(\BA)\bs H(\BA)}\Phi(\bar{h}^{-1}\xi_a)d\bar{h}\\
&=\tau(H_a)\int_{V_a(\BA)}\Phi |D_a|_{\BA},
\end{aligned}\]
where $dh_a$ is the Tamagawa measure on $H_a(\BA)$, $d\bar{h}=\frac{dh}{dh_a}$ is the quotient measure on $H_a(\BA)\bs H(\BA)$, and note that $|D_a|_{\BA}$ is the image measure of $d\bar{h}$ under the isomorphism $V_a(\BA)\cong H_a(\BA)\bs H(\BA)$.
\end{proof}

Note that $D_a$ and $\theta_a$ are both left invariant gauge forms on $V_a$, so $|D_a|_{\BA}=|\theta_a|_{\BA}$.
\begin{lem}
For $a\in F$, we have
\[
\int_{[H]}\left(\sum_{\xi \in V_a(F)}\Phi(h^{-1}\xi)\right)dh=2F_{\Phi}(a).
\]
\end{lem}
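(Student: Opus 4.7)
The plan is to combine the preceding lemmas essentially as a bookkeeping step, since all the real work has already been done. The key observation is that the right-hand side $F_\Phi(a)$ has been identified (in Lemma 2.12) as an integral against the gauge-form measure $|\theta_a|_{\BA}$ on $V_a(\BA)$, while the left-hand side has been identified (in Lemma 2.13) as $\tau(H_a)$ times an integral against the quotient-measure gauge form $|D_a|_{\BA}$. So I only need to compare $|\theta_a|_{\BA}$ with $|D_a|_{\BA}$ and compute $\tau(H_a)$.

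First I would handle the trivial case $V_a(F)=\emptyset$. By the Hasse--Minkowski principle (cited just before Lemma 2.12), in this case $V_a(\BA)=\emptyset$ as well, so by Lemma 2.12 we have $F_\Phi(a)=0$ and by the last sentence of Lemma 2.13 the left-hand side is also $0$; both sides vanish.

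Now suppose $V_a(F)\neq\emptyset$. Apply Lemma 2.13 to write
\[
\int_{[H]}\Bigl(\sum_{\xi \in V_a(F)}\Phi(h^{-1}\xi)\Bigr)dh=\tau(H_a)\int_{V_a(\BA)}\Phi \,|D_a|_{\BA}.
\]
Next, invoke the remark preceding the statement: $D_a$ and $\theta_a$ are both left-invariant gauge forms on the homogeneous variety $V_a$, so they differ by a scalar in $F^\times$, and by the product formula the induced adelic measures coincide: $|D_a|_{\BA}=|\theta_a|_{\BA}$. Therefore the right-hand side equals $\tau(H_a)\int_{V_a(\BA)}\Phi\,|\theta_a|_{\BA}$, which by Lemma 2.12 is $\tau(H_a)F_\Phi(a)$.

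Finally, apply Corollary 2.11, which gives $\tau(H_a)=2$ (whether $a\neq 0$ or $a=0$, using the structural Lemma 2.10 together with the fact that groups of type $\BG_a$ have Tamagawa number $1$ and $\tau(U(V'))=2$ for any hermitian space $V'$). This yields
\[
\int_{[H]}\Bigl(\sum_{\xi \in V_a(F)}\Phi(h^{-1}\xi)\Bigr)dh=2F_\Phi(a),
\]
as desired. There is no real obstacle here; the only subtlety worth pointing out explicitly is the identification $|D_a|_{\BA}=|\theta_a|_{\BA}$, which rests on the fact that $(1)$ is a system of convergence factors for $V_a$ (Corollary 2.11), so the adelic measures attached to the two left-invariant gauge forms on $V_a$ agree without any correction factor.
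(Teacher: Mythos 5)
Your argument is correct and is exactly the paper's intended proof: the paper states this lemma without a written proof, relying precisely on the chain you describe — the orbit-integral identity with $\tau(H_a)\int_{V_a(\BA)}\Phi\,|D_a|_{\BA}$, the identification $|D_a|_{\BA}=|\theta_a|_{\BA}$ from the remark immediately preceding the lemma, the gauge-form expression for $F_{\Phi}(a)$, and $\tau(H_a)=2$. (Your lemma numbers are shifted by two relative to the paper's, but the content matches in every case.)
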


By the discussions in $\S 1$, we thus have the following result.
\begin{prop}
For $\Phi \in \CS(V(\BA))$, we have
\[
I(1,\Phi)=2\Phi(0)+2\sum_{a\in F}F_{\Phi}(a),
\]
and it is absolutely convergent.
\end{prop}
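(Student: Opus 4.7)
The plan is to unfold the theta integral $I(1,\Phi) = \int_{[H]} \sum_{x\in V(F)} \Phi(h^{-1}x)\,dh$ using the $H(F)$-invariant decomposition $V(F) = \{0\} \sqcup \bigsqcup_{a \in F} V_a(F)$ of $V(F)$ by the levels of the quadratic form $q$, and then identify each orbital contribution via Lemma 2.11 together with the Tamagawa-number computation $\tau(H) = 2$ from Corollary 2.8.

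Concretely, since every $h \in H$ preserves $q$, the decomposition of $V(F)$ yields (formally)
\[
I(1,\Phi) = \tau(H)\,\Phi(0) + \sum_{a\in F}\int_{[H]}\sum_{\xi\in V_a(F)}\Phi(h^{-1}\xi)\,dh.
\]
The term at $\{0\}$ contributes $\tau(H)\Phi(0)=2\Phi(0)$ because the summand there is the constant function $\Phi(0)$ on $[H]$. By Lemma 2.11, each inner integral equals $2F_\Phi(a)$, so one is reduced to $I(1,\Phi) = 2\Phi(0) + 2\sum_{a\in F}F_\Phi(a)$.

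The substantive step, and the main obstacle, is justifying the interchange of summation and integration and simultaneously establishing absolute convergence. The approach is to run the same unfolding with $\Phi$ replaced by $|\Phi|$. Since $F$ is a function field, every place of $F$ is non-archimedean, so Schwartz--Bruhat functions are locally constant and compactly supported, and hence $|\Phi|\in\CS(V(\BA))$. Non-negativity of the integrand, combined with Tonelli's theorem, makes the unfolding for $|\Phi|$ unconditional and produces
\[
\int_{[H]}\sum_{x\in V(F)}|\Phi(h^{-1}x)|\,dh \;=\; 2\,|\Phi(0)| + 2\sum_{a\in F} F_{|\Phi|}(a).
\]
The right-hand side is finite: by Lemma 2.9, $F_{|\Phi|}(a) = \int_{V_a(\BA)} |\Phi|\,|\theta_a|_{\BA} \geq 0$, and the Poisson-type identity of Proposition 2.10 applied to $|\Phi|\in\CS(V(\BA))$ forces $\sum_{a\in F} F_{|\Phi|}(a)$ to converge absolutely. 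This shows that $I(1,\Phi)$ is absolutely convergent; Fubini's theorem then legitimates the interchange for the original $\Phi$, and the asserted identity follows with both sides absolutely convergent.
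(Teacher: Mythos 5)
Your proposal is correct and follows essentially the same route as the paper, which proves this proposition by appealing to the unfolding sketched in \S 1: the decomposition $V(F)=\{0\}\sqcup\bigsqcup_{a\in F}V_a(F)$, the orbital identity $\int_{[H]}\sum_{\xi\in V_a(F)}\Phi(h^{-1}\xi)\,dh=2F_{\Phi}(a)$ (Lemma 2.16), $\tau(H)=2$ (Corollary 2.13), and absolute convergence via the Poisson formula (Proposition 2.9). Your explicit Tonelli/Fubini justification using $|\Phi|\in\CS(V(\BA))$ (valid since every place of $F$ is non-archimedean) supplies a detail the paper leaves implicit but does not change the argument.
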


Now we can finally show the Siegel-Weil formula.

\begin{thm}
Assume $m\geq 3$. Then for any $\Phi \in \CS(V(\BA))$ and $g\in G(\BA)$, $E(g,s,\Phi)$ is holomorphic at $s_0=(m-1)/2$, $I(g, \Phi)$ is absolutely convergent, and
\[
I(g,\Phi)=2E(g, s_0, \Phi).
\]
\end{thm}
\begin{proof}
First note that $E(g,s,\Phi)$ is holomorphic at $s_0=(m-1)/2$ by Lemma 2.11.
Also note that $E(g,s_0,\Phi)=E(1,s_0,\omega(g)\Phi)$ and $I(g,\Phi)=I(1,\omega(g)\Phi)$, so it suffices to consider the case $g=1$.

The equality $I(1,\Phi)=2E(1,s_0,\Phi)$ follows from propositions 2.9, 2.10 and 2.17.
\end{proof}

\bibliographystyle{amsplain}

\end{document}